\theoremstyle{plain}
\newtheorem{thm}{Theorem}[section]
\newtheorem{lem}[thm]{Lemma}
\newtheorem{cor}[thm]{Corollary}
\newtheorem*{main}{Main Theorem}
\theoremstyle{definition}
\newtheorem{defn}[thm]{Definition}
\theoremstyle{remark}
\newtheorem{ex}[thm]{Example}
\newtheorem{rmk}[thm]{Remark}
\newcommand{\A}{\mathbb{A}}
\newcommand{\C}{\mathbb{C}}
\newcommand{\N}{\mathbb{N}}
\newcommand{\R}{\mathbb{R}}
\newcommand{\Z}{\mathbb{Z}}
\newcommand{\I}{\mathcal{I}}
\newcommand{\J}{\mathcal{J}}
\renewcommand{\L}{\mathcal{L}}
\renewcommand{\O}{\mathcal{O}}
\renewcommand{\a}{\mathfrak{a}}
\renewcommand{\b}{\mathfrak{b}}
\DeclareMathOperator{\interior}{int}
\DeclareMathOperator{\ord}{ord}
\DeclareMathOperator{\Spec}{Spec}
\begin{document}

\title{Multiplier ideals of monomial space curves}
\author{Howard~M~Thompson}
\address{Department~of~Mathematics, University~of~Michigan~--~Flint, Flint, Michigan 48502-1950}
%\curraddr{Mathematics Department \\ 402 Murchie Science Building \\ 303 East Kearsley Street \\ Flint, Michigan 48502-1950}
\email{hmthomps@umflint.edu}
%\thanks{}
\subjclass[2010]{Primary 14F18; Secondary 14H50, 14M25}
\date{}
%\dedicatory{}
%\keywords{}

\begin{abstract}
  This paper presents a formula for the multiplier ideals of a monomial space curve. The formula is obtained from a careful choice of log resolution. We construct a toric blowup of affine space in such a way that a log resolution of the monomial curve may be constructed from this toric variety in a well controlled manner. The construction exploits a theorem of Gonz\'{a}lez~P\'{e}rez and Teissier~\cite{MR1892938}
\end{abstract}

\maketitle

\section{Introduction}

Multiplier ideals have become an important tool in algebraic geometry. However, they are notoriously difficult to compute. We do have formulas or partial information in a few cases. See Howald~\cite{MR1828466}, Blickle~\cite{MR2092724}, Musta{\c{t}}{\u{a}~\cite{MR2231883}, Saito~\cite{MR2339839}, Teitler~\cite{MR2373586}, Teitler~\cite{MR2324623}, Smith \& Thompson~\cite{MR2389246}, Tucker~\cite{MR2592954}, and Shibuta \& Takagi~\cite{MR2533766}. Let's recall the definition.

\begin{defn}
  Let $X$ be a smooth irreducible variety over $\C$. Let $I\subseteq\O_X$ be a nonzero ideal sheaf and let $\lambda>0$ be a rational number. Fix a log resolution $\mu:\widetilde{X}\to X$ of $I$ and let $F$ be the effective divisor such that $I\cdot\O_X=\O_{\widetilde{X}}(-F)$. The \emph{multiplier ideal} $\J(I^{\lambda})\subseteq\O_X$ associated to $\lambda$ and $I$ is defined to be
  \[
    \J(I^{\lambda})=\mu_*\O_{\widetilde{X}}(K_{\widetilde{X}/X}-\lfloor\lambda\cdot F\rfloor).
  \]
\end{defn}
The multiplier ideal does not depend upon the choice of log resolution. If, in addition, we assume $X=\Spec R$ is affine, we also have the following formula, essentially due to Lipman~\cite{MR1306018}:
\begin{equation} \label{Lipman}
  \J(I^{\lambda})=\bigcap_{\nu}\{f\in R\mid\nu(f)\geq\lfloor\nu(\a)\lambda-\nu(J_{R_{\nu}/R})\rfloor\}.
\end{equation}
where the intersection is over all divisorial valuations $\nu$ such that $R_\nu$ dominates $R$. Note that it suffices to take the intersection over the valuations whose associated divisors appear on a single log resolution.

Given any integer vector $\mathbf{m}=\begin{bmatrix}m_1&m_2&m_3\end{bmatrix}$ in the nonnegative octant of $\R^3$, let $\ord_\mathbf{m}$ be the monomial valuation on $\C[x,y,z]$ given by $x\mapsto m_1$, $y\mapsto m_2$ and $z\mapsto m_3$. Given the ideal $I\subseteq\C[x,y,z]$ of a monomial space curve $\{(t^{n_1},t^{n_2},t^{n_3})\mid t\in\C\}\subseteq\A^3$, let $\tau$ be the smallest monomial ideal containing $I$, let $\mathbf{n}=\begin{bmatrix}n_1&n_2&n_3\end{bmatrix}$ and let $\{f_1,f_2,f_3,\ldots\}$ be a set of binomial generators for $I$  written in increasing order of vanishing with respect to $\ord_\mathbf{n}$. We will identify a finite set of primitive lattice vectors $G$ in the nonnegative octant of $\R^3$ such that
\begin{main}
\[
  \J(I^\lambda)=I^{(\lfloor\lambda-1\rfloor)}\cap\J(\tau^\lambda)\cap \\
  \bigcap_{\mathbf{m}\in G}\{f\mid\ord_\mathbf{m}(f)\geq\lfloor\ord_\mathbf{m} (f_2)\lambda-k_\mathbf{m}\rfloor\}
\]
where $k_\mathbf{m}=m_1+m_2+m_3-1+\ord_\mathbf{m}(f_2)-\ord_\mathbf{m}(f_1)$.
\end{main}
In the next section, we will recall what we know about formulas like Lipman's in special cases. And, in the final section we will recall the result of Gonz\'{a}lez~P\'{e}rez and Teissier~\cite{MR1892938} and use it to prove our theorem

\section{On formulas for multiplier ideals}

If $R$ is a regular essentially finitely generated $\C$-algebra, $I\subseteq R$ is a prime ideal of height $c$ and $R/I$ is regular, then $\J(I^\lambda)=I^{(\lfloor\lambda-c+1\rfloor)}$ is a symbolic power of $I$. That is, when $I$ is the ideal of a smooth subvariety, it suffices to consider only the valuation corresponding to the blowup of $I$.

When $R$ is a polynomial ring and $\a$ is a monomial ideal, we have the following theorem.
\begin{thm}[Howald's Theorem \cite{MR1828466}]
  Let $I\subseteq\C[x_1,\ldots,x_r]$, let $P(I)\subseteq\R^r$ be the Newton polyhedron of $I$, let $\interior(\lambda\cdot P(I))$ be the interior of the dilated polyhedron, and let $\mathbf{1}=(1,\ldots,1)\in\N^r$. Then, the multiplier ideal $\J(I^\lambda)$ is the monomial ideal
  \[
    \J(I^\lambda)=(\mathbf{x}^{\mathbf{v}}\mid\mathbf{v}+\mathbf{1}\in\interior(\lambda\cdot P(I))).
  \]
\end{thm}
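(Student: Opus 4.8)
The plan is to use the torus action on $\A^{r}$ together with the toric log resolutions available for monomial ideals. First I would note that the big torus $T=(\C^{\ast})^{r}$ acts on $\A^{r}=\Spec\C[x_{1},\dots,x_{r}]$ fixing the monomial ideal $I$, and that multiplier ideals are preserved by automorphisms; hence $\J(I^{\lambda})$ is $T$-invariant, so it is itself a monomial ideal, and it suffices to determine for each $\mathbf{v}\in\N^{r}$ whether $\mathbf{x}^{\mathbf{v}}\in\J(I^{\lambda})$. Next I would build a toric log resolution: writing $\A^{r}=X_{\Sigma_{0}}$ for $\Sigma_{0}$ the fan of faces of $\R_{\geq 0}^{r}$ and letting $\mathcal{N}$ be the normal fan of $P(I)$ (whose support is again $\R_{\geq 0}^{r}$), I would choose a smooth fan $\Sigma$ refining both $\Sigma_{0}$ and $\mathcal{N}$ --- their common refinement followed by a smooth subdivision. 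The induced $\mu:X_{\Sigma}\to\A^{r}$ is proper and birational with $X_{\Sigma}$ smooth; since $\Sigma$ refines $\mathcal{N}$, $I\cdot\O_{X_{\Sigma}}=\O_{X_{\Sigma}}(-F)$ is invertible with $F$ supported on the toric boundary, which is simple normal crossings, so $\mu$ is a log resolution of $I$. For a ray $\rho$ of $\Sigma$ with primitive generator $v_{\rho}\in\N^{r}$, the prime divisor $D_{\rho}$ carries the monomial valuation $\mathbf{x}^{\mathbf{u}}\mapsto\langle v_{\rho},\mathbf{u}\rangle$, so the coefficient of $D_{\rho}$ in $F$ is $\min\{\langle v_{\rho},\mathbf{u}\rangle\mid\mathbf{u}\in P(I)\}$, while the standard toric discrepancy computation gives the coefficient of $D_{\rho}$ in $K_{X_{\Sigma}/\A^{r}}$ equal to $\langle v_{\rho},\mathbf{1}\rangle-1$.

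I would then unwind the definition of $\J(I^{\lambda})$ on this resolution. A monomial $\mathbf{x}^{\mathbf{v}}$ lies in $\mu_{\ast}\O_{X_{\Sigma}}(K_{X_{\Sigma}/\A^{r}}-\lfloor\lambda F\rfloor)$ exactly when $\operatorname{div}(\mathbf{x}^{\mathbf{v}})+K_{X_{\Sigma}/\A^{r}}-\lfloor\lambda F\rfloor$ is effective, i.e.\ when
\[
\langle v_{\rho},\mathbf{v}\rangle+\langle v_{\rho},\mathbf{1}\rangle-1-\bigl\lfloor\lambda\min\{\langle v_{\rho},\mathbf{u}\rangle\mid\mathbf{u}\in P(I)\}\bigr\rfloor\geq 0
\]
for every ray $\rho$ of $\Sigma$. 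As $\langle v_{\rho},\mathbf{v}+\mathbf{1}\rangle$ is an integer, this is equivalent to the strict inequality
\[
\langle v_{\rho},\mathbf{v}+\mathbf{1}\rangle>\min\{\langle v_{\rho},\mathbf{w}\rangle\mid\mathbf{w}\in\lambda P(I)\}.
\]

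Finally I would identify this system with the condition $\mathbf{v}+\mathbf{1}\in\interior(\lambda P(I))$. If $\mathbf{v}+\mathbf{1}$ is an interior point, then for any nonzero $v$ in $\R_{\geq 0}^{r}$ the point $(\mathbf{v}+\mathbf{1})-\epsilon v$ lies in $\lambda P(I)$ for small $\epsilon>0$, so $\min\{\langle v,\mathbf{w}\rangle\mid\mathbf{w}\in\lambda P(I)\}\leq\langle v,\mathbf{v}+\mathbf{1}\rangle-\epsilon\langle v,v\rangle<\langle v,\mathbf{v}+\mathbf{1}\rangle$; in particular the displayed strict inequality holds for every ray $\rho$, and so $\mathbf{x}^{\mathbf{v}}\in\J(I^{\lambda})$. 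Conversely, passing to a subdivision never destroys a ray, so the rays of $\Sigma$ include every ray of $\mathcal{N}$, which are precisely the inner normals to the facets of $P(I)$; the inequalities $\langle v_{\rho},\mathbf{w}\rangle\geq\lambda\min\{\langle v_{\rho},\mathbf{u}\rangle\mid\mathbf{u}\in P(I)\}$ attached to these are exactly the facet-defining inequalities of the full-dimensional polyhedron $\lambda P(I)$, and requiring all of them to hold strictly at $\mathbf{v}+\mathbf{1}$ is precisely the condition $\mathbf{v}+\mathbf{1}\in\interior(\lambda P(I))$. This yields the asserted formula.

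The step I expect to be the main obstacle is the construction and verification of the toric log resolution --- exhibiting a smooth fan $\Sigma$ refining both $\Sigma_{0}$ and $\mathcal{N}$, and checking that on $X_{\Sigma}$ the pullback of $I$ is locally principal with simple normal crossings support --- together with the bookkeeping that pins down the discrepancy coefficient $\langle v_{\rho},\mathbf{1}\rangle-1$ and the passage between the floor inequality and the strict inequality. These are standard facts of toric geometry, but they carry the real content of the proof; once the combinatorial dictionary is in place, the identification with $\interior(\lambda P(I))$ is elementary convex geometry.
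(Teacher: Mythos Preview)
The paper does not prove this theorem; it is quoted from Howald~\cite{MR1828466} as background and no argument is given. Your proposal is essentially Howald's own proof: reduce to monomials via torus-invariance, build a toric log resolution by smoothly refining the normal fan of $P(I)$, read off $F$ and $K_{X_\Sigma/\A^r}$ combinatorially, and translate the resulting floor inequalities into the strict facet inequalities that describe $\interior(\lambda\cdot P(I))$. The argument is correct as outlined, and the steps you flag as the main obstacles (existence of a smooth common refinement, the discrepancy formula $\langle v_\rho,\mathbf{1}\rangle-1$, and the integer-versus-floor manipulation) are indeed the substantive toric inputs; they are standard but you have identified them accurately.
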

This theorem tells us that it suffices to consider the divisors that appear on the blowup of the monomial ideal and it gives us a convenient way to compute the intersection \eqref{Lipman} in the monomial case. Recall that the term ideal of an ideal $I$ is the smallest monomial ideal $\tau$ containing $I$. If $I\subseteq\C[x_1,\ldots,x_r]$ is a prime of height $c$ and $\tau$ is the term ideal of $I$,
\[
  \J(I^\lambda)\subseteq I^{(\lfloor\lambda-c+1\rfloor)}\cap\J(\tau^\lambda)
\]
since the formation of multiplier ideals respects containment. So, to give a formula for the multiplier ideals of an ideal $I$, it suffices to find a convenient finite set $N$ such that
\[
  \J(I^\lambda)=I^{(\lfloor\lambda-c+1\rfloor)}\cap\J(\tau^\lambda)\cap\bigcap_{\nu\in N}\{f\in R\mid\nu(f)\geq\lfloor\nu(I)\lambda-\nu(J_{R_{\nu}/R})\rfloor\}
\]
and a convenient way to compute the numbers $\nu(I)$ and $\nu(J_{R_{\nu}/R})$ for $\nu\in N$. Next we will state a result that gives insight into which valuations $N$ might contain. To do this, we recall the notion of a strong factorizing desingularization of a subscheme of a smooth variety.
\begin{thm}[of strong factorizing desingularization \cite{MR1971154}]
  Let $X$ be a smooth variety over a field $\C$ of characteristic zero, let $Z\subset X$ be a closed subvariety, and let $I$ be the corresponding sheaf of ideals. Then there is a sequence of blowups with smooth centers whose composition is
  \[
    \mu:\widetilde{X}\to X
  \]
  so that if $\widetilde{Z}\subset\widetilde{X}$ is the strict transform of $Z$ and $E$ is the exceptional locus of $\mu$, we have.
  \begin{itemize}
    \item[(1)] $E$ is a divisor with simple normal crossings support.
    \item[(2)] At each step the blowup center is disjoint from the regular locus of the strict transform.
    \item[(3)] $\widetilde{Z}$ is smooth and has simple normal crossings with $E$.
    \item[(4)] $I\cdot\O_{\widetilde{X}}=\L\cdot\I_{\widetilde{Z}}$ where $\I_{\widetilde{Z}}$ is the ideal sheaf of $\widetilde{Z}$ and $\L=\O_{\widetilde{X}}(-E)$.
  \end{itemize}
\end{thm}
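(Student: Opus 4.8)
\noindent\emph{Proof plan.} The plan is to exhibit one explicit log resolution of $I$ and then read the formula off Lipman's description \eqref{Lipman}. Since $C$ is the closure in $\A^3$ of the orbit of the one-dimensional subtorus $t\mapsto(t^{n_1},t^{n_2},t^{n_3})$ of $(\C^*)^3$, I would first invoke the theorem of Gonz\'alez P\'erez and Teissier (recalled in the next section) to obtain a toric modification $\pi\colon X_\Sigma\to\A^3$, arising from a regular subdivision $\Sigma$ of the positive octant, for which the strict transform $\widetilde C$ of $C$ is smooth; after a further regular subdivision one may assume in addition that $\Sigma$ refines the inner normal fan of the Newton polyhedron of $\tau$, so that $\tau\cdot\O_{X_\Sigma}$ becomes an invertible sheaf supported on the toric boundary. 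One then completes $\pi$ to a log resolution $\mu\colon\widetilde X\to\A^3$ of $I$ by blowing up $\widetilde C$ and, where needed, finitely many further smooth centers lying over $\widetilde C$ to resolve tangencies with the boundary; conditions (1)--(4) of the strong factorizing desingularization theorem are the template for what $\mu$ must achieve, and the role of the Gonz\'alez P\'erez--Teissier input is that the bulk of $\mu$ is toric, hence completely explicit in the fan $\Sigma$.

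\emph{Applying \eqref{Lipman}.} By \eqref{Lipman} it is enough to intersect the sets $\{f\mid\nu(f)\geq\lfloor\nu(I)\lambda-\nu(J_{R_\nu/R})\rfloor\}$ over the finitely many divisorial valuations whose divisors occur on $\widetilde X$, and I would sort these into three families: (a) the divisor born from blowing up $\widetilde C$; (b) the toric divisors, that is, the torus-invariant exceptional divisors of $\pi$ together with the strict transforms of the coordinate hyperplanes; and (c) the exceptional divisors of the remaining, non-invariant, blowups performed over $\widetilde C$. For (a): the generic point of $C$ lies in the torus, where $\pi$ is an isomorphism, so this valuation is just the order along the curve cut out by $I$; hence $\nu(I)=1$, $\nu(J_{R_\nu/R})=1$ (codimension two, and the toric part of $\mu$ contributes nothing to the log discrepancy of this divisor), and $\{f\mid\nu(f)\geq k\}=I^{(k)}$, so family (a) produces the factor $I^{(\lfloor\lambda-1\rfloor)}$. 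For (b): a monomial valuation $\ord_\mathbf{m}$ detects only the monomials occurring in elements of $I$, so $\ord_\mathbf{m}(I)=\ord_\mathbf{m}(\tau)$, while $\nu(J_{R_\nu/R})$ is the usual toric quantity $m_1+m_2+m_3-1$ (respectively $0$ along a coordinate hyperplane); since every such $\mathbf{m}$ lies in the normal fan of the Newton polyhedron of $\tau$, Howald's theorem identifies the intersection of all of these conditions with $\J(\tau^\lambda)$.

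\emph{Family (c) and the set $G$.} Each divisor $E'$ in this family is created by blowing up the strict transform of $C$ after a chain of toric modifications; I would attach to it the primitive vector $\mathbf{m}=(\ord_{E'}(x),\ord_{E'}(y),\ord_{E'}(z))$ in the positive octant. From the explicit local form of the Gonz\'alez P\'erez--Teissier construction one should be able to check that $\ord_{E'}$ dominates $\ord_\mathbf{m}$, that $\ord_{E'}(I)=\ord_\mathbf{m}(f_2)$, and that $\ord_{E'}(J_{R_{E'}/R})=k_\mathbf{m}$, the correction term $\ord_\mathbf{m}(f_2)-\ord_\mathbf{m}(f_1)$ recording the blow-up carried out beyond the $\mathbf{m}$-weighted blow-up at the point where the strict transform of $C$ meets the toric boundary; and that, in the presence of the factors $I^{(\lfloor\lambda-1\rfloor)}$ and $\J(\tau^\lambda)$ already accounted for, the condition imposed by $E'$ collapses to $\{f\mid\ord_\mathbf{m}(f)\geq\lfloor\ord_\mathbf{m}(f_2)\lambda-k_\mathbf{m}\rfloor\}$. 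Taking $G$ to be the resulting finite set of primitive vectors $\mathbf{m}$ and intersecting the three families would give the stated formula.

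\emph{The main obstacle.} Families (a) and (b) are comparatively routine once the resolution is in hand; essentially all of the work, and the only place where the Gonz\'alez P\'erez--Teissier theorem is truly needed, lies in family (c). There one must understand precisely why $X_\Sigma$ fails to be a log resolution of $I$ and which non-toric blow-ups over $\widetilde C$ are forced, track the strict transform of the monomial curve through every toric chart, prove that the set $G$ is finite, compute $\ord_{E'}(I)$ and $\ord_{E'}(J_{R_{E'}/R})$ -- in particular seeing why only the two binomials $f_1$ and $f_2$ of smallest $\ord_\mathbf{n}$ control the exponents that appear -- and justify the reduction of each $E'$-condition to the stated monomial condition. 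This is where the arithmetic of the semigroup $\langle n_1,n_2,n_3\rangle$ re-enters, and it is the step I expect to be hardest.
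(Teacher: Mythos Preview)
Your proposal is not a proof of the stated theorem at all. The statement in question is the \emph{strong factorizing desingularization} theorem of Bravo and Villamayor~\cite{MR1971154}: for any closed subvariety $Z$ of a smooth variety $X$ over a field of characteristic zero, there exists a sequence of smooth-center blowups satisfying conditions (1)--(4). This is a general structural result in resolution of singularities, a strengthening of Hironaka's theorem, and the paper does not prove it; it merely quotes it as background (``I've stated a weakening of the theorem that is convenient for our purposes''). What you have written is instead a proof plan for the paper's \emph{Main Theorem}---the formula
\[
\J(I^\lambda)=I^{(\lfloor\lambda-1\rfloor)}\cap\J(\tau^\lambda)\cap\bigcap_{\mathbf{m}\in G}\{f\mid\ord_\mathbf{m}(f)\geq\lfloor\ord_\mathbf{m}(f_2)\lambda-k_\mathbf{m}\rfloor\}
\]
for the multiplier ideals of a monomial space curve. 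Nothing in your outline (Lipman's formula, the Gonz\'alez~P\'erez--Teissier construction, the three families of divisors, the set $G$) bears on the existence, for an arbitrary $Z\subset X$, of a resolution with the factorization property $I\cdot\O_{\widetilde X}=\L\cdot\I_{\widetilde Z}$.

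If the intent was indeed to sketch the Main Theorem, then your plan is broadly in line with the paper's approach, though the paper is more specific about how the set $G$ arises: rather than leaving family~(c) as ``non-toric blowups over $\widetilde C$'' to be analyzed later, the paper builds a particular toric variety $X=X_{\Sigma_3}$ in three explicit steps (stellar subdivision along $\mathbf{n}$, slicing by the hyperplane orthogonal to $\mathbf{u}_1-\mathbf{v}_1$, then minimally resolving the relevant two-dimensional cones), shows that the embedded components of $I\cdot\O_X$ are smooth curves lying on divisors indexed precisely by the set $G$, and then proves that among the chain of $\ord_\mathbf{m}(f_2)-\ord_\mathbf{m}(f_1)$ blowups needed over each such divisor only the last one imposes a condition (Lemma~3.7). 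Your sketch gestures at this but does not isolate the key combinatorial identification of $G$ as a subset of the Hilbert basis of two explicit two-dimensional cones.
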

I've stated a weakening of the theorem that is convenient for our purposes. Such a desingularization exists if $Z$ is a subscheme whose regular locus is dense. Without the fourth condition, we would have Hironaka's resolution of singularities. Here is an example to illustrate the difference and give some geometric intuition into this fourth condition.
\begin{ex}
  Let $X=\Spec\C[x,y,z]$ be $\A^3$ and let $Z$ be the union of the $x$ and $y$ axes. Then $I=(xy,z)$ and the blowup of the origin is an embedded resolution of singularities that is not strong factorizing. Consider the chart where the ideal of the exceptional divisor is $(x)$.
  \[
    I\cdot\C\left[x,\frac{y}{x},\frac{z}{x}\right]=\left(x^2\frac{y}{x},x\frac{z}{x}\right)=(x)\left(x\frac{y}{x},\frac{z}{x}\right)
  \]
  but the ideal of the strict transform is $\left(\frac{y}{x},\frac{z}{x}\right)$. The problem here is that the pullback of $I$ to the blowup consists of the exceptional divisor, the strict transform which is two disjoint lines, and an embedded line on the exceptional divisor connecting the two points where the strict transform meets the exceptional divisor:
  \begin{align*}
    I\cdot\C\left[x,\frac{y}{x},\frac{z}{x}\right]&=(x)\cap\left(\frac{y}{x},\frac{z}{x}\right)\cap\left(x^2,\frac{z}{x}\right) \\
    I\cdot\C\left[\frac{x}{y},y,\frac{z}{y}\right]&=(y)\cap\left(\frac{x}{y},\frac{z}{y}\right)\cap\left(y^2,\frac{z}{y}\right) \\
    I\cdot\C\left[\frac{x}{z},\frac{y}{z},z\right]&=(z)
  \end{align*}
  After blowing up this embedded line, we get a strong factorizing resolution:
  \begin{align*}
    I\cdot\C\left[x,\frac{y}{x},\frac{z}{x^2}\right]&=(x)^2\cdot\left(\frac{y}{x},\frac{z}{x^2}\right) \\
    I\cdot\C\left[\frac{x^2}{z},\frac{y}{x},\frac{z}{x}\right]&=\left(\frac{x^2}{z}\right)\cdot\left(\frac{z}{x}\right)^2 \\
    I\cdot\C\left[\frac{x}{y},y,\frac{z}{y^2}\right]&=(y)^2\cdot\left(\frac{x}{y},\frac{z}{y^2}\right) \\
    I\cdot\C\left[\frac{x}{y},\frac{y^2}{z},\frac{z}{y}\right]&=\left(\frac{y^2}{z}\right)\cdot\left(\frac{z}{y}\right)^2 \\
    I\cdot\C\left[\frac{x}{z},\frac{y}{z},z\right]&=(z)
  \end{align*}
  Here the pullback of $I$ is one copy of the first exceptional divisor, two copies of the second exceptional divisor, and the strict transform.
\end{ex}
\begin{thm}
  Let $I\subseteq\C[x_1,\ldots,x_r]$ be a prime of height $c$, let $Z=Z(I)\subseteq X$ be the subvariety cut out by $I$, let $\mu:\widetilde{X}\to X$ be a strong factorizing desingularization of $Z$, let $\widetilde{Z}$ be the strict transform of $Z$, let $\L$ be the invertible sheaf such that $I\cdot\O_{\widetilde{X}}=\L\cdot\I_{\widetilde{Z}}$ and let $\b=\mu_*\L$. Then
  \[
    \J(I^\lambda)=I^{(\lfloor\lambda-c+1\rfloor)}\cap\J(\b^\lambda)
  \]
\end{thm}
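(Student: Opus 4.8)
The plan is to build an explicit log resolution of $I$ out of the strong factorizing desingularization $\mu\colon\widetilde X\to X$ of $Z=Z(I)$, to compute $\J(I^\lambda)$ on it directly from the definition, and then to recognize the two factors on the right-hand side. First I would fix such a $\mu$, write $E=\sum_i a_iE_i$ for the (possibly non-reduced) exceptional divisor with $\O_{\widetilde X}(-E)=\L$, so that $I\cdot\O_{\widetilde X}=\O_{\widetilde X}(-E)\cdot\I_{\widetilde Z}$, and write $K_{\widetilde X/X}=\sum_i k_iE_i$. Since $\widetilde Z$ is smooth and has simple normal crossings with $E$ — so in particular $\widetilde Z\not\subseteq E_i$ for each $i$ — blowing up $\widetilde Z$ yields $\pi\colon\overline X\to\widetilde X$ with irreducible exceptional divisor $E'$, and $\mu'=\mu\circ\pi\colon\overline X\to X$ is a log resolution of $I$: one has $\pi^*E_i=\widetilde E_i$ (the strict transform) because $\widetilde Z\not\subseteq E_i$, hence $I\cdot\O_{\overline X}=\O_{\overline X}(-\sum_i a_i\widetilde E_i-E')$; blowing up a smooth center that has simple normal crossings with a simple normal crossings divisor preserves simple normal crossings, so $\bigcup_i\widetilde E_i\cup E'$ has simple normal crossings support; and $\mu'$ is an isomorphism over $X\setminus Z$. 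The standard discrepancy computation for the blowup of a smooth codimension-$c$ center gives $K_{\overline X/\widetilde X}=(c-1)E'$, so $K_{\overline X/X}=\pi^*K_{\widetilde X/X}+(c-1)E'=\sum_i k_i\widetilde E_i+(c-1)E'$.

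Next I would feed this into the definition of the multiplier ideal, $\J(I^\lambda)=\mu'_*\O_{\overline X}\bigl(K_{\overline X/X}-\lfloor\lambda(\sum_i a_i\widetilde E_i+E')\rfloor\bigr)$. A regular function on $X$ has nonnegative order along every prime divisor of $\overline X$ that is not $\mu'$-exceptional, so only the $\widetilde E_i$ and $E'$ impose conditions, and — using $\lfloor\lambda a_i\rfloor-k_i=\lfloor\lambda a_i-k_i\rfloor$ and $c-1-\lfloor\lambda\rfloor=-\lfloor\lambda-c+1\rfloor$ — this unwinds to
\[
  \J(I^\lambda)=\Bigl(\bigcap_i\{f\in R\mid\ord_{E_i}(f)\ge\lfloor\lambda a_i-k_i\rfloor\}\Bigr)\cap\{f\in R\mid\ord_{E'}(f)\ge\lfloor\lambda-c+1\rfloor\}.
\]

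The crucial step is to identify the last factor with $I^{(\lfloor\lambda-c+1\rfloor)}$. By condition~(2) of strong factorizing desingularization, $\mu$ is an isomorphism over $\Reg(Z)$, hence over the generic point of $Z$, so $\O_{X,\eta_Z}\cong\O_{\widetilde X,\eta_{\widetilde Z}}$ carries $I$ to $\I_{\widetilde Z}$; and near $\eta_{\widetilde Z}$ the morphism $\pi$ is simply the blowup of a smooth subvariety in a smooth variety, so $\ord_{E'}$ restricts there to the $\I_{\widetilde Z}$-adic order. Hence $\{f\in R\mid\ord_{E'}(f)\ge e\}=I^{(e)}$ for every $e$ (with the convention $I^{(e)}=R$ for $e\le0$), and in particular $\J(I^\lambda)\subseteq I^{(\lfloor\lambda-c+1\rfloor)}$. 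It then remains to match with $\J(\b^\lambda)$. From $I\cdot\O_{\widetilde X}=\O_{\widetilde X}(-E)\cdot\I_{\widetilde Z}\subseteq\O_{\widetilde X}(-E)$ one gets $I\subseteq\mu_*\O_{\widetilde X}(-E)=\b$, whence $\J(I^\lambda)\subseteq\J(\b^\lambda)$ by monotonicity of multiplier ideals, which gives the inclusion $\subseteq$. For $\supseteq$, let $f\in I^{(\lfloor\lambda-c+1\rfloor)}\cap\J(\b^\lambda)$; the same inclusion gives $\ord_{E_i}(\b)\ge a_i$, so Lipman's formula~\eqref{Lipman} for $\J(\b^\lambda)$ applied to the valuation $\ord_{E_i}$ (whose $\nu(J_{R_\nu/R})$ equals $k_i$) yields $\ord_{E_i}(f)\ge\lfloor\lambda\,\ord_{E_i}(\b)-k_i\rfloor\ge\lfloor\lambda a_i-k_i\rfloor$, while $f\in I^{(\lfloor\lambda-c+1\rfloor)}$ gives $\ord_{E'}(f)\ge\lfloor\lambda-c+1\rfloor$; by the displayed description of $\J(I^\lambda)$, $f\in\J(I^\lambda)$.

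The hard part will be the identification $\{f\mid\ord_{E'}(f)\ge e\}=I^{(e)}$ — verifying that the divisor obtained by blowing up the strict transform $\widetilde Z$ computes precisely the symbolic powers of $I$ — which is exactly where conditions~(2) and~(3) (isomorphism over $\Reg(Z)$; $\widetilde Z$ smooth, not contained in any $E_i$) are really used. The remainder is routine bookkeeping with floor functions and with the behavior of order functions under strict transform and pushforward.
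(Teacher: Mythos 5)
Your argument is correct, and it takes a genuinely more explicit route than the paper's. Both proofs obtain the inclusion $\J(I^\lambda)\subseteq I^{(\lfloor\lambda-c+1\rfloor)}\cap\J(\b^\lambda)$ from $I\subseteq\b$, and both turn on the factorization $I\cdot\O_{\widetilde X}=\L\cdot\I_{\widetilde Z}$; they diverge on the reverse inclusion. The paper intersects $\b\cdot\O_{\widetilde X}\subseteq\L$ with $\I_{\widetilde Z}$ to get $\I_{\widetilde Z}\cap\b\cdot\O_{\widetilde X}\subseteq I\cdot\O_{\widetilde X}$, passes to a further log resolution $Y$ of $\b$ and $\I_{\widetilde Z}$ that factors through the blowup of $\I_{\widetilde Z}$, observes that $I\cdot\O_Y$ and $\b\cdot\O_Y$ then differ only along the support of $\I_{\widetilde Z}\cdot\O_Y$, and pushes down. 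You instead verify that the single blowup $\overline X=\mathrm{Bl}_{\widetilde Z}\widetilde X$ is already a log resolution of $I$ (this is exactly where the normal crossings conditions of strong factorization are spent), compute $K_{\overline X/X}$ and $I\cdot\O_{\overline X}$ there, and read $\J(I^\lambda)$ off as a finite intersection of valuation conditions, matching the $E'$-condition with the symbolic power and absorbing each $E_i$-condition into $\J(\b^\lambda)$ via one containment of Lipman's formula \eqref{Lipman} together with $\ord_{E_i}(\b)\ge a_i$. Your version buys concreteness: $\b$ never has to be resolved, and the step the paper compresses into ``pushing down from $Y$ establishes the formula'' becomes explicit floor-function bookkeeping; the paper's version avoids the discrepancy computation and the verification that $\overline X\to X$ is a log resolution. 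Two small points: the local isomorphism of $\mu$ at the generic point of $\widetilde Z$, which underlies $\{f\in R\mid\ord_{E'}(f)\ge e\}=I^{(e)}$, follows from $\widetilde Z\not\subseteq\mathrm{Supp}(E)$ (condition (3)) rather than from condition (2) alone; and your parenthetical claim that $\mu'$ is an isomorphism over $X\setminus Z$ is not guaranteed by the hypotheses, but it is also not needed, since $K_{\overline X/X}$ and the divisor of $I\cdot\O_{\overline X}$ are in any case supported on exceptional divisors.
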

\begin{proof}
  Evidently, $I\subseteq\b$. So, the inclusion $\J(I^\lambda)\subseteq I^{(\lfloor\lambda-c+1\rfloor)}\cap\J(\b^\lambda)$ is clear.

  On the other hand, $I\cdot\O_{\widetilde{X}}\subseteq\b\cdot\O_{\widetilde{X}}\subseteq\L$. So, after intersecting with $\I_{\widetilde{Z}}$ we obtain  $I\cdot\O_{\widetilde{X}}\subseteq\I_{\widetilde{Z}}\cap\b\cdot\O_{\widetilde{X}}\subseteq I\cdot\O_{\widetilde{X}}$ since $I\cdot\O_{\widetilde{X}}\subseteq\I_{\widetilde{Z}}$ and $\L\cap\I_{\widetilde{Z}}=\L\cdot\I_{\widetilde{Z}}=I\cdot\O_{\widetilde{X}}$. That is, $(\I_{\widetilde{Z}}\cap\b\cdot\O_{\widetilde{X}})\subseteq I\cdot\O_{\widetilde{X}}$. Now let $\pi:Y\to X$ be a log resolution of $\b$ (along with $\a$ and $\I_{\widetilde{Z}}$ as well) that factors through the blowup of $\I_{\widetilde{Z}}$ on $\widetilde{X}$. Evidently, $I\cdot\O_Y$ differs from $\b\cdot\O_Y$ only along the support of $\I_{\widetilde{Z}}\cdot\O_Y$. Now, pushing down from $Y$ establishes the formula.
\end{proof}
So, multiplier ideals reflect something of the geometry of strong factorizing resolutions. This result tells us that if $\widetilde{X}$ is an embedded resolution of $I$, then we should look to the embedded components of $I\cdot\O_{\widetilde{X}}$ to find the sources of additional divisorial valuations that determine $\J(I^\lambda)$.
\begin{rmk}
  There are many ideals that suffice in the role of $\b$ in the above theorem. And, there is a largest such ideal. I am not sure whether the procedure above picks that largest ideal. Notice that a $\b$-ideal can be chosen so that it is supported on the singular locus of $I$.
\end{rmk}

\section{The formula for monomial space curves}

From now on, let $R=\C[x_1,x_2,x_3]$, let $\mathbf{n}=\begin{bmatrix}n_1&n_2&n_3\end{bmatrix}$ be a primitive positive integer vector, let $I$ be the kernel of the $\C$-algebra homomorphism $\varphi:R\to\C[t]$ given by $x_1\mapsto t^{n_1}$, $x_2\mapsto t^{n_2}$ and $x_3\mapsto t^{n_3}$, let $\tau$ be the term ideal of $I$, let $C\subseteq\A^3$ be the monomial space curve cut out by $I$, and let $\ord_{\mathbf{m}}(x_i)=m_i$ for any nonnegative integer vector $\mathbf{m}=\begin{bmatrix}m_1&m_2&m_3\end{bmatrix}$. Now write the minimal binomial generators $f_1,f_2,\ldots$ of $I$ in order of increasing $\ord_{\mathbf{n}}$ order and write
\[
  f_1=\mathbf{x}^{\mathbf{u}_1}-\mathbf{x}^{\mathbf{v}_1},f_2=\mathbf{x}^{\mathbf{u}_2}-\mathbf{x}^{\mathbf{v}_2},\ldots
\]

We will construct a toric blowup $\mu:X\to\A^3$ such that the strict transform of $C$ is smooth and has normal crossings with the exceptional locus. Moreover, the embedded components of the pullback of $I$ will be smooth curves contained in the smooth locus of $X$ with at most one such curve on each exceptional divisor. Furthermore, an embedded curve will have normal crossings with each torus invariant divisor it meets that does not contain it. Once we have all that, any toric desingularization $\widetilde{X}$ of $X$ will give an embedded resolution of $I$, the order of vanishing of $I$ on any exceptional divisor of $X$ will be the same as that of its term ideal $\tau$, and will will have
\[
  \J(I^\lambda)=I^{(\lfloor\lambda-1\rfloor)}\cap\J(\tau^\lambda)\cap\bigcap_{\nu\in G}\{f\in R\mid\nu(f)\geq\lfloor\nu(\a)\lambda-\nu(J_{R_{\nu}/R})\rfloor\}
\]
where the set $G$ comes from the embedded components on $X$. We will then study these embedded components and as a last step, we will identify the set $G$.

To create $X$, we will exploit a theorem of Gonz\'{a}lez~P\'{e}rez and Teissier.
\begin{thm}[\cite{MR1892938}]
  Let $\Gamma$ be a finitely generated submonoid of $\Z\Gamma\cong\Z^d$. Assume $\Gamma$ has a trivial unit group and let $\varphi:\N^r\to\Gamma$ be a surjective monoid homomorphism. $\varphi$ induces an embedding of (not necessarily normal) toric varieties $Z\to\A^r$ where $Z=\Spec\C[\Gamma]$. Let $\ell$ be the orthogonal complement of the kernel of the linear map $\R^r\to\R^d$ extending $\varphi$, let $\sigma$ be the cone $\R_{\geq0}^r\cap\ell$, let $N_{\sigma}$ be the lattice $\R\sigma\cap N$, and let $Z_{\sigma}$ be the toric variety of the cone $\sigma$ with respect to the lattice $N_{\sigma}$..  Let $\Sigma$ be any fan supported on $\R_{\geq0}^r$ containing the cone $\sigma$ and let $\pi_{\Sigma}:X_{\Sigma}\to\A^r$ be the associated toric modification.
  \begin{itemize}
    \item[(1)] Then the strict transform $\widetilde{Z}$ of $Z$ in $X_{\Sigma}$ is contained in the open affine subvariety $X_{\sigma}$ and the restriction $\pi_{\Sigma}|_{\widetilde{Z}}:\widetilde{Z}\to Z$ is the normalization map. In fact, $\widetilde{Z}\cong Z_{\sigma}$.
    \item[(2)] If $\Sigma'$ is any regular fan refining $\Sigma$, then the map $\pi_{\Sigma'}:X_{\Sigma'}\to\A^r$ is an embedded resolution of $Z$. (In particular, the strict transform of $Z$ via $\pi_{\Sigma'}$ is transverse to the orbit stratification of $X_{\Sigma'}$.)
  \end{itemize}
\end{thm}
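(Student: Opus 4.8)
The plan is to translate everything into the orbit--cone dictionary for toric varieties, applied to the closure of a subtorus. Write $T=(\C^*)^r$ for the dense torus of $\A^r=\Spec\C[\N^r]$, with character lattice $M=\Z^r$ (so $x_i=\chi^{e_i}$) and cocharacter lattice $N=\Z^r$. The homomorphism $\varphi$ extends to a surjection $M\to\Z\Gamma$ whose kernel is exactly the kernel $K$ of $\R^r\to\R^d$, so $\ell=K^{\perp}$ and $N_1:=N\cap\ell$ satisfy $N_1\otimes\R=\ell$; let $T_1\subseteq T$ be the subtorus with cocharacter lattice $N_1$, equivalently character lattice $\Z\Gamma$. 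The first observation is that $Z\cap T=\Spec\C[\Z\Gamma]=T_1$ and $Z=\overline{T_1}$ in $\A^r$, so---since $\pi_{\Sigma}$ is an isomorphism over $T$---the strict transform $\widetilde Z$ is simply the closure of $T_1$ in $X_{\Sigma}$. I would also record, from $\Gamma$ being finitely generated with trivial unit group, that $\sigma=\R_{\geq0}^r\cap\ell$ is a strongly convex cone, full-dimensional in $\ell$, so that $N_{\sigma}=\R\sigma\cap N=N_1$.

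Next I would run the standard orbit computation. For $\tau\in\Sigma$, the orbit $O_{\tau}$ meets $\overline{T_1}$ if and only if $\operatorname{relint}(\tau)$ meets $N_1\otimes\R=\ell$; any such point lies in $\R_{\geq0}^r\cap\ell=\sigma$, and since $\sigma\in\Sigma$ and $\Sigma$ is a fan, a relative interior point of $\tau$ sitting in the face $\sigma\cap\tau$ of $\tau$ forces $\tau\subseteq\sigma$, i.e.\ $\tau$ is a face of $\sigma$. Hence $\widetilde Z\subseteq X_{\sigma}$. On the chart $X_{\sigma}=\Spec\C[\sigma^{\vee}\cap M]$ the map $\pi_{\Sigma}$ is dual to the inclusion $\C[\N^r]\hookrightarrow\C[\sigma^{\vee}\cap M]$ (valid because $\sigma\subseteq\R_{\geq0}^r$ gives $\N^r\subseteq\sigma^{\vee}$), and restricting to closures realizes $\widetilde Z\cap X_{\sigma}$ as $\Spec\C[\Gamma']$ where $\Gamma'\subseteq\Z\Gamma$ is the image of the semigroup $\sigma^{\vee}\cap M$. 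The crucial point is that $\sigma\subseteq\ell$ forces $\ell^{\perp}\subseteq\sigma^{\vee}$, so the inequalities cutting out $\sigma^{\vee}$ only see the image of a monomial in $\Z\Gamma$; therefore any lift of an element of the saturation of $\Gamma$ already lies in $\sigma^{\vee}$, and $\Gamma'=\operatorname{sat}(\Gamma)$. This simultaneously gives $\widetilde Z\cong Z_{\sigma}$ and shows $\pi_{\Sigma}|_{\widetilde Z}\colon\Spec\C[\operatorname{sat}(\Gamma)]\to\Spec\C[\Gamma]$ is the normalization map, proving (1).

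For (2) I would rerun the orbit argument with $\Sigma'$ in place of $\Sigma$: since $\Sigma'$ refines $\Sigma\ni\sigma$, any $\tau\in\Sigma'$ with $O_{\tau}\cap\widetilde Z\neq\emptyset$ is again forced into $\sigma$ (a relative interior point of $\tau$ lands in $\R_{\geq0}^r\cap\ell=\sigma$, and a cone of a refinement having a relative interior point in a face of its ambient $\Sigma$-cone lies in that face). On each such chart $X_{\tau}\cong\A^{\dim\tau}\times(\C^*)^{r-\dim\tau}$, the same lifting remark identifies $\widetilde Z\cap X_{\tau}$ with the affine toric variety of the cone $\tau$ taken with respect to $N_1$. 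Because a saturated sublattice is a direct summand, a cone smooth for $N$ that lies in $\ell$ is smooth for $N_1$; hence $\widetilde Z$ is smooth, and---since the rays of $\tau$ are the same primitive vectors read in $N$ or in $N_1$---in the coordinates of $X_{\tau}$ the subvariety $\widetilde Z$ is the product of the honest coordinate subspace spanned by the $\A^{\dim\tau}$-directions with a subtorus of $(\C^*)^{r-\dim\tau}$. That product description is exactly transversality of $\widetilde Z$ to the orbit stratification; together with $X_{\Sigma'}$ smooth with normal crossings toric boundary and $\pi_{\Sigma'}$ proper and birational (its fan still has support $\R_{\geq0}^r$), this says $\pi_{\Sigma'}$ is an embedded resolution of $Z$.

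The step I expect to be the real obstacle is making the two lattice changes---passing from $N$ to $N_1$ and from $\Gamma$ to its saturation---fit together chart by chart: one must verify that the ``lift it and the defining inequalities come for free'' phenomenon holds uniformly over all cones $\tau\subseteq\sigma$, and that both smoothness of the ambient chart and transversality of the strict transform survive restricting the lattice. The toric orbit--cone correspondence and the subtorus-closure lemma supply all the geometry; the work is in checking that these dictionary entries line up.
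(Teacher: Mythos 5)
This theorem is quoted from Gonz\'{a}lez~P\'{e}rez and Teissier \cite{MR1892938}; the paper states it without proof and simply applies it, so there is no internal argument to compare yours against. Your proposal is the standard proof of that result and is sound: identifying $Z$ with the closure of the subtorus $T_1$ having cocharacter lattice $N_1=N\cap\ell$, using the orbit--cone criterion to confine the strict transform to the faces of $\sigma$, and observing that $K\subseteq\tau^{\vee}$ for every cone $\tau\subseteq\ell$ so that the image of $\tau^{\vee}\cap M$ in $\Z\Gamma$ is the full saturated semigroup --- this correctly yields both the normalization statement in (1) and, after the direct-summand argument showing a cone smooth for $N$ inside $\ell$ is smooth for $N_1$, the smoothness and transversality in (2). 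The only ingredients you lean on without proof are genuinely standard (the criterion for when a subtorus closure meets an orbit, the fact that $\C[\Gamma]\to\C[\operatorname{sat}\Gamma]$ is the normalization, and that finite generation plus trivial unit group makes $\R_{\geq0}\Gamma$ strongly convex, hence $\sigma$ full-dimensional in $\ell$), so I see no gap.
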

We will apply this theorem to the monoid $\Gamma\subseteq\N$ generated by $n_1$, $n_2$ and $n_3$. In this case, the embedded toric variety is our curve $C\subseteq\A^3$ and $\ell$ is the span of $\mathbf{n}$. Let $\Sigma_1$ be the stellar subdivision of $\R_{\geq0}^3$ along the ray $\rho$ with primitive vector $\mathbf{n}$. On $X_{\Sigma_1}$, the strict transform of $C$ is normal and contained in the open affine $X_{\rho}$. But, $C$ is a curve and $\rho$ is a ray. So, the strict transform is smooth and contained in the smooth locus of $X_{\Sigma_1}$. And, any toric desingularization of $X_{\Sigma_1}$ provides an embedded resolution of $C$.

\begin{lem}
  $I\cdot\O_{X_{\Sigma_1}}$ has an embedded component exactly when $\ord_\mathbf{n}(f_1)<\ord_\mathbf{n}(f_2)$ and $f_1$ cuts out the embedded curve on the exceptional divisor set-theoretically in this case.
\end{lem}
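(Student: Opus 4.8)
The plan is to carry out the computation in the open affine chart $X_\rho\subseteq X_{\Sigma_1}$ attached to the new ray $\rho=\R_{\geq0}\mathbf{n}$. As $\mathbf{n}$ is primitive, $\rho$ is a smooth cone and $X_\rho\cong\Spec\C[\xi,\eta_1^{\pm1},\eta_2^{\pm1}]$, with exceptional divisor $E=V(\xi)$; since $C\smallsetminus\{\mathbf 0\}$ lies in the big torus, the strict transform $\widetilde C$ lies in $X_\rho$ and is there the smooth curve $V(\eta_1-1,\eta_2-1)$, meeting $E$ only at $\eta=(1,1)$. Write $d_i=\ord_\mathbf{n}(f_i)$, so $d_1\le d_2\le\cdots$. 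A monomial $\mathbf{x}^{\mathbf w}$ pulls back to $\xi^{\langle\mathbf{n},\mathbf w\rangle}$ times a Laurent monomial in $\eta_1,\eta_2$, and since the two monomials of $f_i$ share the value $d_i$, the generator $f_i$ pulls back on $X_\rho$ to $\xi^{d_i}g_i$ with $g_i$ a difference of Laurent monomials in $\eta_1,\eta_2$ whose exponents differ by $\mathbf{u}_i-\mathbf{v}_i$, an element of $\mathbf{n}^\perp\cap\Z^3$. Hence $I\cdot\O_{X_\rho}=\xi^{d_1}\I'$ where $\I'=(g_1,\xi^{d_2-d_1}g_2,\xi^{d_3-d_1}g_3,\dots)$. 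Because a local equation of $E$ is a nonzerodivisor, the short exact sequence $0\to(\xi^{d_1})/\xi^{d_1}\I'\to\O/I\O\to\O/(\xi^{d_1})\to0$ with $(\xi^{d_1})/\xi^{d_1}\I'\cong\O/\I'$ yields $\operatorname{Ass}(\O/I\O)=\{(\xi)\}\cup\operatorname{Ass}(\O/\I')$; so the embedded components of $I\cdot\O$ are precisely the non-minimal primes arising for $\I'$, all of which lie in $E$.

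Now restrict $\I'$ to $E=V(\xi)$: every generator with $d_i>d_1$ dies, so $\I'\cdot\O_E$ is generated by the $g_i$ with $d_i=d_1$. Suppose first that $\ord_\mathbf{n}(f_1)<\ord_\mathbf{n}(f_2)$, i.e. $d_1<d_2$. Then only $f_1$ is of minimal order, $\I'\cdot\O_E=(g_1)$ is principal, and $V(g_1)\cap E$ is a curve $D$ — by construction the trace on $E$ of the strict transform of the surface $\{f_1=0\}$, so that $f_1$ cuts $D$ out set-theoretically on $E$. No component of $D$ is $\widetilde C$ (which meets $E$ in a single point), so the generic point $\q$ of a component of $D$ is a codimension-two point with $(\xi)\subsetneq\q$. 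There $\O_\q$ is a two-dimensional regular local ring with maximal ideal $(\xi,g_1)$ — here one uses that $g_1$ is squarefree (a product of distinct primitive binomials), so that $g_1\bmod\xi$ is a uniformizer of the DVR $\O_\q/(\xi)$ — and $\I'_\q=(g_1)+\xi J$ with $J=(\xi^{d_i-d_1-1}g_i:i\ge2)$. If $\ell$ is the $\xi$-adic order of $J$ in the DVR $\O_\q/(g_1)$, then $\xi^\ell\notin\I'_\q$ while $\xi\cdot\xi^\ell$ and $g_1\cdot\xi^\ell$ both lie in $\I'_\q$; thus $\xi^\ell$ is a nonzero socle element of $\O_\q/\I'_\q$, so $\operatorname{depth}(\O_\q/\I'_\q)=0$ and $\q\in\operatorname{Ass}(\O/\I')=\operatorname{Ass}(\O/I\O)\smallsetminus\{(\xi)\}$. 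This produces the embedded component, cut out on $E$ by $f_1$ as claimed.

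Suppose instead $d_1=d_2$. Then $\I'\cdot\O_E=(g_1,g_2,\dots)$, and the crucial point is that $\mathbf{u}_1-\mathbf{v}_1$ and $\mathbf{u}_2-\mathbf{v}_2$ already generate the full rank-two lattice $\mathbf{n}^\perp\cap\Z^3$. Granting this, $(g_1,g_2)$ is, inside $\O_{X_\rho}$, exactly $\I_{\widetilde C}=(\eta_1-1,\eta_2-1)$; since in addition $\I'\subseteq\I_{\widetilde C}$ (because $\widetilde C\subseteq V(\I')$ and $\widetilde C$, being smooth, is reduced) we get $\I'=\I_{\widetilde C}$, so $I\cdot\O_{X_\rho}=\xi^{d_1}\I_{\widetilde C}$ has associated primes $(\xi)$ and $\I_{\widetilde C}$ only — both minimal, hence no embedded component. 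In either case, at the generic point of a torus-invariant curve of $E$ (these lie in the other two maximal toric charts) the residual ideal is a unit — such a curve lies in neither $\widetilde C$ nor $\overline D$ — so $I\cdot\O$ is locally $(\xi^{d_1})$ there, and since a normal variety is Cohen--Macaulay at every point of codimension $\le2$, the quotient by $\xi^{d_1}$ has positive depth; hence no torus-invariant curve of $E$ is an associated prime, and the computation on $X_\rho$ accounts for all embedded components.

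The step I expect to be the real obstacle is the lattice-generation statement in the case $d_1=d_2$. When $I$ is a complete intersection it holds because $(f_1,f_2)=I$ is prime, so the sublattice spanned by $\mathbf{u}_1-\mathbf{v}_1,\mathbf{u}_2-\mathbf{v}_2$ has the same saturation as $\mathbf{n}^\perp\cap\Z^3$ and therefore equals it; were the index larger, the corresponding lattice ideal would have torsion in its defining quotient and fail to be prime. When $I$ is not a complete intersection one invokes the Herzog/Hilbert--Burch description of a monomial space curve ideal as the ideal of $2\times2$ minors of a $2\times3$ matrix of monomials: the Pl\"ucker relation among the three minors shows that one exponent vector is, up to sign, the sum of the other two, and since all three generate $\mathbf{n}^\perp\cap\Z^3$, so does any pair. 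With this in hand the residual-ideal bookkeeping, the socle computation, and the chart-by-chart verification are all routine.
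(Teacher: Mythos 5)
Your argument is correct in substance and lives in the same place as the paper's proof --- the chart $X_\rho$ of the new ray, where each $f_i$ pulls back to $\xi^{\ord_\mathbf{n}(f_i)}g_i$ --- but it extracts the conclusion by a genuinely different route. The paper asserts that $IS$ is generated by the pullbacks of $f_1$ and $f_2$ alone and then writes down an explicit primary decomposition $(y)^{d_1}\cap(g_1,y^{d_2})\cap(g_1,g_2)$, reading the embedded component off from it. You instead factor out $\xi^{d_1}$, pass to the residual ideal $\I'$ via the exact sequence for associated primes, and then (a) when $d_1<d_2$ produce a socle element at the generic point of each component of $V(\xi,g_1)$, and (b) when $d_1=d_2$ prove $(g_1,g_2)=\I_{\widetilde C}$ by showing that $\mathbf{u}_1-\mathbf{v}_1$ and $\mathbf{u}_2-\mathbf{v}_2$ generate $\mathbf{n}^\perp\cap\Z^3$ (primality in the complete intersection case, Hilbert--Burch otherwise). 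Item (b) is precisely the justification missing from the paper's two-generation claim, so your version is a net gain in rigor; the paper's explicit decomposition, by contrast, buys a concrete description of the $(g_1,y^{d_2})$-primary piece that is used implicitly later.

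Two soft spots to repair. First, in (a) you should check that the order $\ell$ is finite: if every $\xi^{d_i-d_1-1}g_i$ lay in $(g_1)_\q$ then $(IS)_\q$ would equal the principal ideal $(\xi^{d_1}g_1)_\q$, whose radical is $(\xi g_1)_\q$, contradicting $\sqrt{(IS)_\q}=(\xi)_\q$ (the strict transform of $C$ does not pass through $\q$) together with $g_1\in\q$. Second, your reason for discarding the torus-invariant curves of $E$ is not the right one: such a curve could a priori be a component of the trace of the strict transform of $V(f_1)$ on $E$ without lying in $\overline D$, which you defined inside the dense torus of $E$. The claim is nevertheless true: the closure of $V(f_1)\cap T$ can contain the one-dimensional orbit of the cone $\langle e_i,\mathbf{n}\rangle$ only if the plane $(\mathbf{u}_1-\mathbf{v}_1)^\perp$ meets the relative interior of that cone without containing its span, which is impossible because $\mathbf{n}\in(\mathbf{u}_1-\mathbf{v}_1)^\perp$; hence the strict transform of $V(f_1)$ meets each invariant curve of $E$ in finitely many points and $g_1$ is a unit at its generic point. (Neither you nor the paper rules out zero-dimensional embedded primes at the torus-fixed points; that gap is the paper's as much as yours.)
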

\begin{proof}
  Now we study embedded components on $X_{\Sigma_1}$ in order to understand how to build our good toric variety $X$. Let $S$ be the coordinate ring of $X_{\rho}$. Then
  \[
    S=R[(\mathbf{x}^{\mathbf{u}_1-\mathbf{v}_1})^{\pm1},(\mathbf{x}^{\mathbf{u}_2-\mathbf{v}_2})^{\pm1},\ldots],
  \]
  and $IS=(y^{\ord_\mathbf{n}(f_1)}g_1,y^{\ord_\mathbf{n}(f_2)}g_2,\ldots)$ where $(g_1,g_2,\ldots)$ is the ideal of the strict transform and $(y)$ is the prime ideal corresponding to the exceptional divisor. We know that the strict transform is smooth and meets the exceptional divisor transversally. In particular, $IS$ is generated by two elements. When we combine this with the fact that we wrote the generators in order of increasing order of vanishing, we have
  \[
    IS=(y^{\ord_\mathbf{n}(f_1)}g_1,y^{\ord_\mathbf{n}(f_2)}g_2)=(y)^{\ord_\mathbf{n}(f_1)}\cap(g_1,y^{\ord_\mathbf{n}(f_2)})\cap(g_1,g_2)
  \]
  From this calculation, we see that $I\cdot\O_{X_{\Sigma_1}}$ has an embedded component exactly when $\ord_\mathbf{n}(f_1)<\ord_\mathbf{n}(f_2)$ and $f_1$ cuts out the embedded curve on the exceptional divisor set-theoretically in this case.
\end{proof}
\begin{cor}
  If $\ord_\mathbf{n}(f_1)=\ord_\mathbf{n}(f_2)$, then any toric desingularization of $X_{\Sigma_1}$ yields a strong factorizing resolution of $C$. And,
  \[
    \J(I^\lambda)=I^{(\lfloor\lambda-1\rfloor)}\cap\J(\tau^\lambda).
  \]
\end{cor}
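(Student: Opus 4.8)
The plan is to prove both assertions of the corollary at once by showing that, when $\ord_\mathbf{n}(f_1)=\ord_\mathbf{n}(f_2)$, any toric desingularization $\nu\colon\widetilde{X}\to X_{\Sigma_1}$ makes the composite $\mu=\pi_{\Sigma_1}\circ\nu\colon\widetilde{X}\to\A^3$ a strong factorizing desingularization of $C$ whose exceptional divisor $E$ satisfies $I\cdot\O_{\widetilde{X}}=\O_{\widetilde{X}}(-E)\cdot\I_{\widetilde{C}}$, with $\widetilde{C}$ the strict transform of $C$. Granting this, the formula for $\J(I^\lambda)$ drops out: the theorem expressing $\J(I^\lambda)$ through a strong factorizing desingularization, applied with height $c=2$ and with $\b=\mu_*\O_{\widetilde{X}}(-E)$, gives $\J(I^\lambda)=I^{(\lfloor\lambda-1\rfloor)}\cap\J(\b^\lambda)$, and this gets pinched between two copies of itself using the general containment $\J(I^\lambda)\subseteq I^{(\lfloor\lambda-1\rfloor)}\cap\J(\tau^\lambda)$ and the inclusion $\tau\subseteq\b$ --- see the last paragraph.

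Conditions (1)--(3) of the strong factorizing desingularization theorem are essentially formal here. On the smooth toric variety $\widetilde{X}$ the torus-invariant divisors have simple normal crossings support, so the exceptional locus $E$ of $\mu$ is simple normal crossings, giving (1). A toric desingularization is a composite of star subdivisions of non-regular cones, hence a composite of blowups along smooth torus-invariant centers, each of which is an isomorphism over the smooth locus of $X_{\Sigma_1}$; since the strict transform of $C$ in $X_{\Sigma_1}$ is a smooth curve lying inside the smooth open $X_\rho$, it stays inside the locus over which the desingularization is an isomorphism, so it is regular at every stage and each center is disjoint from it, which gives (2). Finally, part (2) of the Gonz\'{a}lez~P\'{e}rez--Teissier theorem, applied to the regular fan refining $\Sigma_1$ that defines $\widetilde{X}$, says $\mu$ is an embedded resolution of $C$ whose strict transform is transverse to the orbit stratification; in particular $\widetilde{C}$ is smooth and has normal crossings with $E$, which is (3).

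The heart of the matter is condition (4), $I\cdot\O_{\widetilde{X}}=\O_{\widetilde{X}}(-E)\cdot\I_{\widetilde{C}}$, and this is where the hypothesis $\ord_\mathbf{n}(f_1)=\ord_\mathbf{n}(f_2)$ does its work, by way of the preceding lemma: under this hypothesis $I\cdot\O_{X_{\Sigma_1}}$ has \emph{no} embedded component. Consequently the primary decomposition of $I\cdot\O_{X_{\Sigma_1}}$ is $\mathcal{A}\cap\I_{\widetilde{C}}$, where $\mathcal{A}$ is its $E_\rho$-primary component; since the local ring of $X_{\Sigma_1}$ at the generic point of the prime divisor $E_\rho$ is a discrete valuation ring with valuation $\ord_\mathbf{n}$, and the valuation of $I\cdot\O_{X_{\Sigma_1}}$ there equals $\ord_\mathbf{n}(I)=\ord_\mathbf{n}(f_1)$, the primary ideal $\mathcal{A}$ must be the divisorial ideal $\O_{X_{\Sigma_1}}(-\ord_\mathbf{n}(I)\,E_\rho)$; and because $\widetilde{C}\subseteq X_\rho$ lies in the smooth locus, where the local equation of $E_\rho$ is a nonzerodivisor modulo $\I_{\widetilde{C}}$, this intersection is in fact a product: $I\cdot\O_{X_{\Sigma_1}}=\O_{X_{\Sigma_1}}(-\ord_\mathbf{n}(I)\,E_\rho)\cdot\I_{\widetilde{C}}$. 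Pulling this relation up along $\nu$ --- which is an isomorphism over a neighborhood of $\widetilde{C}$ --- yields $I\cdot\O_{\widetilde{X}}=\bigl(\O_{X_{\Sigma_1}}(-\ord_\mathbf{n}(I)\,E_\rho)\cdot\O_{\widetilde{X}}\bigr)\cdot\I_{\widetilde{C}}$, and one then identifies $\O_{X_{\Sigma_1}}(-\ord_\mathbf{n}(I)\,E_\rho)\cdot\O_{\widetilde{X}}$ with $\O_{\widetilde{X}}(-E)$ for $E=\sum_\mathbf{m}\ord_\mathbf{m}(I)\,D_\mathbf{m}$, the sum running over the exceptional prime divisors $D_\mathbf{m}$ of $\widetilde{X}$; indeed the order of $I\cdot\O_{\widetilde{X}}$ along $D_\mathbf{m}$ is $\ord_\mathbf{m}(I)$ because $\ord_{D_\mathbf{m}}$ restricts to $\ord_\mathbf{m}$ on $R$, and $\I_{\widetilde{C}}$ contributes nothing since $\widetilde{C}\not\subseteq D_\mathbf{m}$. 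I expect this identification --- checking that the pulled-up ideal is locally principal with exactly these multiplicities and that the desingularization introduces no new embedded component --- to be the main obstacle; the needed input is the toric combinatorics together with the fact that the minimal binomial generators $f_i$ have coprime monomials.

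Assuming (1)--(4), part (b) is short. The strong factorizing desingularization theorem gives $\J(I^\lambda)=I^{(\lfloor\lambda-1\rfloor)}\cap\J(\b^\lambda)$ with $\b=\mu_*\O_{\widetilde{X}}(-E)$. Now $\tau\subseteq\b$: for every $\mathbf{m}$ in the nonnegative octant one has $\ord_\mathbf{m}(\tau)=\ord_\mathbf{m}(I)$ --- ``$\le$'' is immediate from $I\subseteq\tau$, and ``$\ge$'' holds because every monomial generating $\tau$ occurs as a term of some $f\in I$, hence has $\ord_\mathbf{m}$ at least $\ord_\mathbf{m}(f)\ge\ord_\mathbf{m}(I)$ --- so the coefficient of $D_\mathbf{m}$ in $E$, namely $\ord_\mathbf{m}(I)=\ord_\mathbf{m}(\tau)=\ord_{D_\mathbf{m}}(\tau)$, shows $\tau\cdot\O_{\widetilde{X}}\subseteq\O_{\widetilde{X}}(-E)$, whence $\tau\subseteq\mu_*\bigl(\tau\cdot\O_{\widetilde{X}}\bigr)\subseteq\mu_*\O_{\widetilde{X}}(-E)=\b$. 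Monotonicity of multiplier ideals then gives $\J(\tau^\lambda)\subseteq\J(\b^\lambda)$, and combining this with the general containment yields
\[
  \J(I^\lambda)\ \subseteq\ I^{(\lfloor\lambda-1\rfloor)}\cap\J(\tau^\lambda)\ \subseteq\ I^{(\lfloor\lambda-1\rfloor)}\cap\J(\b^\lambda)\ =\ \J(I^\lambda),
\]
so all three ideals agree, which is the asserted formula; and the first assertion of the corollary is precisely the statement, established above, that $\mu$ is a strong factorizing resolution of $C$.
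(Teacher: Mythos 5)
Your proposal follows exactly the route the paper intends (it prints no proof of this corollary): the preceding lemma shows $I\cdot\O_{X_{\Sigma_1}}$ has no embedded component when $\ord_\mathbf{n}(f_1)=\ord_\mathbf{n}(f_2)$, so the displayed primary decomposition collapses to $(y)^{\ord_\mathbf{n}(f_1)}\cap(g_1,g_2)=(y)^{\ord_\mathbf{n}(f_1)}\cdot(g_1,g_2)$, giving condition (4) of a strong factorizing resolution, and then the Theorem of Section~2 with $c=2$ gives the formula with $\b=\mu_*\L$ in place of $\tau$. Your sandwich argument for replacing $\b$ by $\tau$ --- using $\ord_\mathbf{m}(\tau)=\ord_\mathbf{m}(I)$ to get $\tau\subseteq\b$, then squeezing between $\J(I^\lambda)\subseteq I^{(\lfloor\lambda-1\rfloor)}\cap\J(\tau^\lambda)$ and $I^{(\lfloor\lambda-1\rfloor)}\cap\J(\b^\lambda)=\J(I^\lambda)$ --- is clean and arguably tidier than the alternative of invoking Howald's theorem to see that $\b$ and $\tau$ have the same Newton polyhedron and hence the same multiplier ideals.

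The one place you rightly flag as an obstacle is real, and it is worth being precise about where the work sits. The lemma's computation lives entirely on the chart $X_\rho$, whereas the factorization $I\cdot\O_{X_{\Sigma_1}}=\O(-\ord_\mathbf{n}(I)E_\rho)\cdot\I_{\widetilde{C}}$ is a global statement: it must also be verified at the points of $E_\rho$ outside $X_\rho$ (the three torus-fixed points and the invariant curves), where one needs $I\cdot\O$ to be the divisorial ideal with no extra embedded structure. That is exactly what the paper's later lemmas address in the case $\ord_\mathbf{n}(f_1)\neq\ord_\mathbf{n}(f_2)$ (via the factorization $f_1=\mathbf{x}^{\mathbf{u}_1}(1-\mathbf{x}^{\mathbf{v}_1-\mathbf{u}_1})$ and the fact that some term of each generator is a pure power of a variable invertible near each fixed point), and the same observation closes your gap here. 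For the pullback to $\widetilde{X}$, note that the multiplier-ideal formula does not actually require condition (4) to hold on \emph{every} toric desingularization: the Section~2 Theorem only needs one $\widetilde{X}$ satisfying (4), after which it passes to a further log resolution $Y$; so for the displayed formula you may choose $\widetilde{X}$ to also be a log resolution of $\tau$, making $\tau\cdot\O_{\widetilde{X}}$, and hence (by equality of orders along every invariant divisor) the divisorial part of $I\cdot\O_{\widetilde{X}}$, locally principal. Only the first sentence of the corollary (``any toric desingularization'') requires the stronger pointwise check, and there the argument is the same orbit-by-orbit verification. With that caveat spelled out, your proof is correct and is the paper's own argument made explicit.
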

\begin{rmk}
  Notice that on any toric desingularization of $X_{\Sigma_1}$, the pullback of $I$ is the same as the pullback of $(f_1,f_2)$.
\end{rmk}
\begin{lem}
  If $\ord_\mathbf{n}(f_1)\neq\ord_\mathbf{n}(f_2)$, then the embedded curve passes through one or two of the three torus fixed points of $X_{\Sigma_1}$.
\end{lem}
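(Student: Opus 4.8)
The plan is to transport the question onto the exceptional divisor $E_\rho=\overline{O(\rho)}$ of $X_{\Sigma_1}$, which is a complete toric surface. Each maximal cone $\sigma_{ij}=\langle\mathbf n,e_i,e_j\rangle$ of $\Sigma_1$ has the ray $\rho$ as a face, so the three torus fixed points of $X_{\Sigma_1}$ all lie on $E_\rho$ and are exactly its three torus fixed points; the fan of $E_\rho$ lives in $N/\Z\mathbf n\cong\Z^2$, with rays $\bar e_1,\bar e_2,\bar e_3$ and maximal cones $\bar\sigma_{ij}$, and it is complete because $\R^3_{\ge0}+\R\mathbf n=\R^3$, which uses the positivity hypothesis on $\mathbf n$. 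Thus the statement becomes: a curve of a certain shape on a complete toric surface with exactly three fixed points passes through one or two of them.

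Next I would pin down that curve. By the previous lemma, the embedded curve $B$ is, inside the dense torus $O(\rho)\subseteq E_\rho$, the subvariety $\{\,w: w^{\mathbf d}=1\,\}$ cut out by $f_1$, where $\mathbf d=\mathbf u_1-\mathbf v_1$; since $f_1\in I$ we have $\mathbf n\cdot\mathbf d=0$, so $\mathbf d$ is a nonzero element of the character lattice $M\cap\rho^\perp$ of $O(\rho)$. Writing $\mathbf d=e\,\mathbf d_0$ with $\mathbf d_0$ primitive, $\{w^{\mathbf d}=1\}$ is a disjoint union of $e$ cosets of the one-dimensional subtorus $\{w^{\mathbf d_0}=1\}$, which is the image of the one-parameter subgroup $\xi_0$ generating the rank-one (saturated) lattice $\mathbf d_0^\perp\cap(N/\Z\mathbf n)$.

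The heart of the argument is then the standard computation of limits of one-parameter subgroups in a toric variety: $\lim_{s\to0}\xi_0(s)$ lies in the orbit $O(\gamma)$, where $\gamma$ is the smallest cone of the fan of $E_\rho$ containing $\xi_0$, and this limit is a torus fixed point precisely when $\gamma$ is two-dimensional, i.e.\ when $\xi_0$ lies in the relative interior of some $\bar\sigma_{ij}$; otherwise the limit lies on a torus-invariant curve of $E_\rho$. Since $B$ is the union of the closures of the $e$ cosets, and since left translation by a torus element carries fixed points to fixed points and orbits to orbits, $B$ meets exactly the set of fixed points $\{\,x_{\sigma_{ij}}:\xi_0\text{ or }-\xi_0\text{ lies in }\operatorname{relint}\bar\sigma_{ij}\,\}$. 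This set has at most two elements, since a two-dimensional cone cannot contain both $\xi_0$ and $-\xi_0$ in its relative interior; and it is nonempty unless both $\xi_0$ and $-\xi_0$ lie on rays of the fan of $E_\rho$, which would make two of $\bar e_1,\bar e_2,\bar e_3$ antiparallel and force $e_i+c\,e_j\in\Z\mathbf n$ for some $i\ne j$ and $c>0$, hence a vanishing coordinate of $\mathbf n$, against the positivity of $\mathbf n$. Therefore $B$ passes through one or two of the three fixed points.

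The only real obstacle is the toric bookkeeping: confirming that $B$ really is the closure of $\{w^{\mathbf d}=1\}$ inside the divisor $E_\rho$ — the previous lemma exhibits it only in the affine chart $X_\rho$, which meets $E_\rho$ in its dense torus, so one must check that nothing else appears along the boundary of $E_\rho$ — and, in the reducible case $e>1$, ruling out that different cosets pass through different fixed points and together meet all three. Both points follow from the fact that translation by a torus element preserves the orbit stratification. A minor point worth flagging is that a torus translate of a one-parameter subgroup can have its limit in a positive-dimensional orbit, and this is exactly the mechanism that produces the ``one fixed point'' case.
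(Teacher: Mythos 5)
Your argument is correct, but it is a genuinely different route from the paper's. The paper never leaves the three affine charts: it observes that $\Sigma_1$ is the normal fan of (the normalized blowup of) $(x_1^{n_2n_3},x_2^{n_3n_1},x_3^{n_1n_2})$, so that at the fixed point indexed by $i$ the strict transform of $f_1$ is the unit-or-not factor $1-\mathbf{x}^{\mathbf{v}_1-\mathbf{u}_1}$ (resp.\ $\mathbf{x}^{\mathbf{u}_1-\mathbf{v}_1}-1$), and hence the embedded curve passes through that point exactly when neither $\mathbf{x}^{\mathbf{u}_1}$ nor $\mathbf{x}^{\mathbf{v}_1}$ is a pure power of $x_i$; the count ``one or two'' then drops out of the elementary combinatorial fact that an irreducible binomial in three variables not divisible by any variable has exactly one or two terms that are pure powers, necessarily of distinct variables. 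You instead work intrinsically on the complete toric surface $E_\rho$, identify the curve with (a union of torus translates of) a one-parameter subgroup determined by $\mathbf{d}=\mathbf{u}_1-\mathbf{v}_1$, and read off the fixed points met from which cones of the quotient fan contain $\pm\xi_0$ in their relative interiors; your use of positivity of $\mathbf{n}$ to rule out the ``zero fixed points'' case (no two of $\bar e_1,\bar e_2,\bar e_3$ are antiparallel) is the correct analogue of the paper's irreducibility argument, and your equivariance remark correctly handles the reducible case $e>1$ (which in fact cannot occur here, again by irreducibility of $f_1$). The trade-off: the paper's computation is shorter and tells you explicitly \emph{which} fixed points are hit in terms of the exponents $\mathbf{u}_1,\mathbf{v}_1$ — information it reuses in the next lemma when slicing with the hyperplane $H$ — whereas your argument is more conceptual, generalizes readily beyond three variables, and makes transparent why the answer is governed only by the position of $\mathbf{d}^\perp$ relative to the fan. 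The one point you flag as an obstacle (that nothing extra appears along the boundary of $E_\rho$) is genuinely worth a sentence, since $f_1$ itself vanishes identically on $E_\rho\setminus O(\rho)$; the resolution is that the embedded curve is by construction the closure of the associated primes found on the chart $X_\rho$, i.e.\ of $\{w^{\mathbf{d}}=1\}$, exactly as you use it.
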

\begin{proof}
  Notice that $\Sigma_1$ is the fan of the normalized blowup of the ideal $(x_1^{n_2n_3},x_2^{n_3n_1},x_3^{n_1n_2})$. $X_{\Sigma_1}$ has three torus fixed points. And, at each of these torus fixed points one of the three generators of the ideal $(x_1^{n_2n_3},x_2^{n_3n_1},x_3^{n_1n_2})$ is a principal generator for $(x_1^{n_2n_3},x_2^{n_3n_1},x_3^{n_1n_2})$. Since $f_1=\mathbf{x}^{\mathbf{u}_1}-\mathbf{x}^{\mathbf{v}_1}$, the embedded component passes through the torus fixed closed point where $x_i^\frac{n_1n_2n_3}{n_i}$ generates the ideal $(x_1^{n_2n_3},x_2^{n_3n_1},x_3^{n_1n_2})$ exactly when neither $\mathbf{x}^{\mathbf{u}_1}$ nor $\mathbf{x}^{\mathbf{v}_1}$ is a power of $x_i$. Moreover, since there are only three variables and $f_1$ is irreducible, either one or two of the terms of $f_1$ is a power of a variable.
\end{proof}
Let $Y$ be the toric surface containing $C$ cut out by $(f_1)$. We will apply the theorem of Gonz\'{a}lez~P\'{e}rez and Teissier to the semigroup of the surface $Y$ as well. Here the kernel of the extension of $\varphi$ is spanned by $\mathbf{u}_1-\mathbf{v}_1$.To create the fan $\Sigma_2$, slice the fan ${\Sigma_1}$ with the hyperplane, $H$, orthogonal to $\mathbf{u}_1-\mathbf{v}_1$. This is equivalent blowing up $(\mathbf{x}^{\mathbf{u}_1},\mathbf{x}^{\mathbf{v}_1})$ the term ideal of $f_1$. 
\begin{lem}
   Let $\Sigma$ be any regular fan refining $\Sigma_2$.
  \begin{itemize}
    \item[(1)] $I\cdot\O_{X_{\Sigma_2}}$ has no embedded components on the newly created divisors.
    \item[(2)] The embedded component of $I\cdot\O_{X_{\Sigma_2}}$ does not pass through any torus fixed closed points.
    \item[(3)] The strict transform $C$ in $X_{\Sigma}$ is smooth and contained in the smooth locus of $X_{\Sigma}$.
    \item[(4)] The map $\pi_{\Sigma}:X_{\Sigma}\to\A^r$ is a simultaneous embedded resolution of both $C$ and $Y$. In particular, the strict transform of $Y$ via $\pi_{\Sigma}$ is transverse to the orbit stratification of $X_{\Sigma}$.
    \item[(5)] The embedded components of $I\cdot\O_{X_{\Sigma}}$ are supported on the intersection of $Y$ with the torus invariant exceptional divisors of $X$. In particular, the supports of the embedded components are smooth curves and each of these curves meets any invariant divisor that does not contain it transversally.
  \end{itemize}
\end{lem}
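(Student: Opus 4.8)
The plan is to read off the statement from the theorem of González Pérez and Teissier applied \emph{simultaneously} to $C$ and to $Y$, followed by a chart‑by‑chart description of $I\cdot\O$ on $X_{\Sigma_2}$. Throughout I would assume $\ord_\mathbf{n}(f_1)<\ord_\mathbf{n}(f_2)$, the case of equality having been treated by the corollary above (there is then no embedded component and only (3)--(4) have content).

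The starting observation is that $\mathbf{n}$ lies on $H$: since $f_1\in I$ we have $\langle\mathbf{u}_1-\mathbf{v}_1,\mathbf{n}\rangle=0$. Hence $\rho$ survives in $\Sigma_2$, every cone of $\Sigma_2$ lies in $\overline{H^+}$, in $\overline{H^-}$, or in $H$, and $\Sigma_2$ refines simultaneously $\Sigma_1$ (which contains $\rho$) and the coarse fan whose maximal cones are $\R_{\geq0}^3\cap\overline{H^{\pm}}$ (which contains $\sigma=\R_{\geq0}^3\cap H$, the cone attached to $Y$ by the theorem of González Pérez and Teissier, $H$ being the orthogonal complement of the generator $\mathbf{u}_1-\mathbf{v}_1$ of the kernel for $Y$). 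I would then invoke that theorem twice, for $C$ with $\ell=\R\mathbf{n}$ and for $Y$ with $\ell=H$: any regular $\Sigma$ refining $\Sigma_2$ refines fans containing $\rho$ and $\sigma$ respectively, so $\pi_\Sigma$ is a simultaneous embedded resolution of $C$ and $Y$, the strict transform of $C$ is smooth and transverse to the orbit stratification, and $X_\Sigma$ is smooth because $\Sigma$ is regular; this is (3) and (4). The same theorem for $Y$ also shows that $\widetilde Y$ is contained in $\bigcup_{\delta\subseteq H}X_\delta$, a union of charts over cones of dimension at most two, hence over charts carrying no torus fixed point of $X_{\Sigma_2}$.

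The heart of the argument is to identify the embedded component of $I\cdot\O_{X_{\Sigma_2}}$ as the thickening of $\widetilde Y\cap D_\rho$ by $\ord_\mathbf{n}(f_2)D_\rho$. On a maximal cone $\delta\subseteq\overline{H^+}$ the Laurent monomial $\mathbf{x}^{\mathbf{u}_1-\mathbf{v}_1}$ is regular, so $f_1=\mathbf{x}^{\mathbf{v}_1}\bigl(\mathbf{x}^{\mathbf{u}_1-\mathbf{v}_1}-1\bigr)$; the second factor cuts out $\widetilde Y$ smoothly and is a unit away from $\widetilde Y$, in particular at $x_\delta$, where each $f_j$ becomes a monomial times a unit and $I\cdot\O_{X_\delta}$ is a monomial ideal; one checks it has no codimension‑two embedded primary component there, and a parallel computation at the generic point of each invariant divisor disposes of the rest of such a chart. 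On the at most two‑dimensional charts $X_\delta$ with $\delta\subseteq H$ — where $\widetilde Y$ actually lives and $X_{\Sigma_2}$ may be singular — I would factor out the common divisorial part of $(f_1,f_2)$ (multiplicity $\min(\ord_D f_1,\ord_D f_2)$ along each invariant $D$) and observe that the residual ideal is generated by the smooth equation of $\widetilde Y$ together with a residual of $f_2$, so its embedded component is supported on $\widetilde Y\cap\{D:\ord_D(f_2)>\ord_D(f_1)\}$. It then remains to show that among the invariant divisors of $X_{\Sigma_2}$ this inequality holds only for $D=D_\rho$: it fails on the original coordinate divisors (both orders vanish by disjointness of supports), and on a divisor $D_{\rho'}$ created by slicing with $H$ — whose primitive generator lies on $H$ and has a zero entry in the slot of the variable $H$ fixes — one produces, from Herzog's description of the toric ideal of a three‑generated numerical semigroup, a minimal binomial generator not divisible by that variable, hence with $\ord_{\rho'}\le\ord_{\rho'}(f_1)$; the same generator also keeps an embedded component off $D_{\rho'}$. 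This gives (1) and (2), and (5) follows by combining ``embedded component $\subseteq\widetilde Y$'' with the transversality of $\widetilde Y$ to the orbit stratification from González Pérez and Teissier.

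The step I expect to be the main obstacle is precisely this last chart‑by‑chart bookkeeping on $X_{\Sigma_2}$: because the sliced variety need not be smooth, one has to control primary decompositions of monomial (and, near $\widetilde Y$, almost‑monomial) ideals in singular affine toric rings, and one has to pin down, invariant divisor by invariant divisor, which binomial generator of $I$ has least order — the crucial point being that $f_1$ is the generator of least order exactly along $D_\rho$ and along none of the divisors introduced by slicing with $H$.
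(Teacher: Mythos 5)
Your proposal is correct and follows essentially the same route as the paper: a double application of the Gonz\'{a}lez P\'{e}rez--Teissier theorem (to $C$ with $\ell=\R\mathbf{n}$ and to $Y$ with $\ell=H$) for (3)--(5), the factorization $f_1=\mathbf{x}^{\mathbf{v}_1}(\mathbf{x}^{\mathbf{u}_1-\mathbf{v}_1}-1)$ with the second factor a unit at the torus fixed points for (2), and the observation that the newly created rays lie on coordinate hyperplanes while $I$ contains a binomial one of whose terms is a pure power of the corresponding variable for (1). The paper draws the slightly cleaner conclusion that the order of vanishing of $I$ along each new divisor is zero (rather than your $\ord_{\rho'}(I)\le\ord_{\rho'}(f_1)$), and it leaves implicit the chart-by-chart bookkeeping that you flag as the main obstacle.
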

\begin{proof}
  Note that at most two new rays are created by cutting with $H$ and each of these rays is on at least one coordinate hyperplane. So, at least one of the $x_i$ does not vanish on the corresponding divisor, However, $I$ must contain a binomial one of whose terms is a power of $x_i$. Therefore, the order of vanishing of $I$ along the divisor must be zero. Hence, $I\cdot\O_{X_{\Sigma_2}}$ has no embedded components on the newly created divisors. Since $f_1$ factors as $\mathbf{x}^{\mathbf{u}_1}(1-\mathbf{x}^{\mathbf{v}_1-\mathbf{u}_1})$ or $\mathbf{x}^{\mathbf{v}_1}(\mathbf{x}^{\mathbf{u}_1-\mathbf{v}_1}-1)$ on each toric open affine chart and the factor $1-\mathbf{x}^{\mathbf{v}_1-\mathbf{u}_1}$ (or $\mathbf{x}^{\mathbf{u}_1-\mathbf{v}_1}-1$) is a unit in the local ring of a torus fixed closed point, after this blowup the embedded component no longer passes through any of the torus fixed closed points. So, on $X_{\Sigma_2}$ the embedded component meets only torus invariant curves. Moreover, looking at the factorization of $f_1$ is one way to see the embedded component is smooth away from these curves and meets these curves as nicely as possible at the identity of the one-dimensional tori of these curves. The rest follows immediately from the theorem of Gonz\'{a}lez~P\'{e}rez and Teissier.
\end{proof}
If these one-dimensional tori are in the smooth locus of $X_{\Sigma_2}$, set $X=X_{\Sigma_2}$. If one or more of these one-dimensional tori are not in the smooth locus, the third and final step in creating $X$ is to desingularize each of corresponding the two-dimensional cones in the fan of $X_{\Sigma_2}$ that correspond to the torus invariant curves that meet the embedded component. Use the minimal toric desingularization(s). Call the resultant fan $\Sigma_3$. We have created $X=X_{\Sigma_3}$. Now we will study the embedded components of $I\cdot\O_X$.
\begin{lem}
  After blowing up the support of this embedded component on the divisor corresonding to the ray with primite vector $\mathbf{m}=\begin{bmatrix}m_1 & m_2 & m_3\end{bmatrix}$ and the supports of its strict transforms $\ord_{\mathbf{m}}(f_2)-\ord_{\mathbf{m}}(f_1)$ times for each ray of $\Sigma_3$ and resolving the toric singularities of the ambient space, we are left with a strong factorizing resolution of $I$.
\end{lem}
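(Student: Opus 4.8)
The plan is to run a local analysis along each embedded curve and then patch. By the preceding lemmas any toric desingularization of $X=X_{\Sigma_3}$ is already an embedded resolution of $I$ in which the strict transform $\widetilde{C}$ of $C$ is smooth, lies in the smooth locus, and meets transversally every torus invariant divisor that does not contain it; and the embedded components of $I\cdot\O_X$ are the smooth curves $D_\rho=\widetilde{Y}\cap E_\rho$, one for each ray $\rho$ of $\Sigma_3$ (primitive vector $\mathbf{m}$) such that $\widetilde{Y}\cap E_\rho$ is a curve and $\ord_{\mathbf{m}}(f_1)<\ord_{\mathbf{m}}(f_2)$. These are the ray $\mathbf{n}$ together with the rays produced in the third step of the construction of $X$. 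So it is enough to show: for each such $\rho$, blowing up $D_\rho$ and then, iteratively, the strict transform of the previous centre, a total of $\ord_{\mathbf{m}}(f_2)-\ord_{\mathbf{m}}(f_1)$ times, removes the embedded component lying over $E_\rho$ while keeping $\widetilde{C}$ smooth with normal crossings with the enlarged exceptional divisor. The concluding toric desingularization of the ambient space is an isomorphism near $\widetilde{C}$ and near the newly produced exceptional curves, so it preserves these properties and, by making every exceptional divisor Cartier, displays the factorization $I\cdot\O=\L\cdot\I_{\widetilde{C}}$ demanded of a strong factorizing resolution.

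For the local model, fix such a $\rho$ and work near a point of $D_\rho$, in the smooth locus of $X$, in coordinates in which $y$ is a local equation of $E_\rho$, $g_1$ is a local equation of $\widetilde{Y}$, $\widetilde{C}=V(g_1,g_2)$, and $\{y,g_1,g_2\}$ is part of a regular system of parameters; thus $D_\rho=V(y,g_1)$. From the remark that the pull-back of $I$ agrees with that of $(f_1,f_2)$, together with the computation of $I\cdot\O_{X_{\Sigma_1}}$ and the binomial shape of $f_1,f_2$, one gets $I\cdot\O_X=(y^{a}g_1,\,y^{b}g_2)$ up to units, where $a=\ord_{\mathbf{m}}(f_1)<b=\ord_{\mathbf{m}}(f_2)$. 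Hence $I\cdot\O_X=(y^{a})\cdot(g_1,\,y^{b-a}g_2)$, and the residual ideal $(g_1,\,y^{b-a}g_2)$ cuts out $\widetilde{C}$ together with $D_\rho$ thickened to order $b-a$; away from the single point $D_\rho\cap\widetilde{C}$ it is just $(g_1,y^{b-a})$.

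Now blow up $D_\rho$. In the chart where $g_1=y g_1'$ the new exceptional divisor is $V(y)$ and the residual ideal pulls back to $y\cdot(g_1',\,y^{b-a-1}g_2)$; in the chart where $y=g_1 v$ the residual ideal becomes principal. So after one blow-up the residual ideal has the same shape, with $b-a$ lowered by one and with $D_\rho$ replaced by $\widetilde{Y}'\cap E'$, the intersection of the strict transform of $\widetilde{Y}$ with the new exceptional divisor. Iterating a total of $b-a=\ord_{\mathbf{m}}(f_2)-\ord_{\mathbf{m}}(f_1)$ times drives the exponent to zero: the residual ideal becomes the unit ideal away from $\widetilde{C}$ and equals $\I_{\widetilde{C}}$ near it, so no embedded component survives over $E_\rho$. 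At each stage the centre is a smooth curve in the ambient smooth locus meeting $\widetilde{C}$, if at all, transversally and at a single point, so blowing it up keeps $\widetilde{C}$ smooth and transverse to the exceptional locus and keeps the exceptional locus simple normal crossings. One also checks that the final exceptional divisor over $E_\rho$ carries $\ord(I)=\ord_{\mathbf{m}}(f_2)$ and relative-canonical coefficient $(m_1+m_2+m_3-1)+(\ord_{\mathbf{m}}(f_2)-\ord_{\mathbf{m}}(f_1))=k_{\mathbf{m}}$, which is the source of $\mathbf{m}$ in the set $G$ of the Main Theorem.

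It remains to patch the local pictures for the various $\rho$. The centres $D_\rho$ lie on distinct exceptional divisors, and, by part (5) of the lemma for $\Sigma_2$ (an embedded curve meets an invariant divisor not containing it at the identity of that divisor's one-dimensional torus), $D_\rho$ meets any $E_{\rho'}$ carrying another embedded curve transversally and off $D_{\rho'}$; hence the blow-up at one embedded curve leaves the others undisturbed and the rays may be processed in any order. I expect the main obstacle to be exactly this global bookkeeping: establishing the local form $I\cdot\O_X=(y^{\ord_{\mathbf{m}}(f_1)}g_1,\,y^{\ord_{\mathbf{m}}(f_2)}g_2)$ with $f_1,f_2$ — the generators ordered by $\ord_{\mathbf{n}}$ — being the correct two generators along every relevant ray $\mathbf{m}$, and checking that neither more nor fewer than $\ord_{\mathbf{m}}(f_2)-\ord_{\mathbf{m}}(f_1)$ blow-ups are forced there; the single-centre resolution itself is the routine computation above.
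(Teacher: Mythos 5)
Your proposal is correct and follows essentially the same route as the paper: identify the embedded components ray by ray via the orders of vanishing of $f_1$ and $f_2$, use the local form $I\cdot\O_X=(y^{\ord_\mathbf{m}(f_1)}g_1,y^{\ord_\mathbf{m}(f_2)}g_2)$ coming from the earlier lemmas, and blow up the smooth curve $\widetilde{Y}\cap E_\rho$ and its strict transforms $\ord_\mathbf{m}(f_2)-\ord_\mathbf{m}(f_1)$ times. The paper simply asserts the termination after that many blowups, whereas you supply the explicit two-chart computation; the "global bookkeeping" you flag is handled in the paper by the remark that the pullback of $I$ agrees with that of $(f_1,f_2)$ together with the transversality statements of the preceding lemma.
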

\begin{proof}
Note that all the rays of fan $\Sigma_3$ other than the positive coordinate axes lie on the hyperplane $H$. If $\mathbf{m}=\begin{bmatrix}m_1 & m_2 & m_3\end{bmatrix}$ is a primitive vector along such a ray, then the orders of vanishing of $f_1$ and $f_2$ along the corresponding exceptional divisor are
\[
  \ord_\mathbf{m}(f_1)=\langle\mathbf{m},\mathbf{u_1}\rangle=\langle\mathbf{m},\mathbf{v_1}\rangle\text{ and }\ord_\mathbf{m}(f_2)=\min(\langle\mathbf{m},\mathbf{u_2}\rangle,\langle\mathbf{m},\mathbf{v_2}\rangle).
\]
Note that the corresponding divisor contains an embedded component exactly when $\ord_{\mathbf{m}}(f_1)<\ord_{\mathbf{m}}(f_2)$. And, recall that all the embedded components are smooth curves, at most one on each invariant divisor, meetng the invariant divisors that don't contain them transversally. So, after blowing up the support of this embedded component and the supports of its strict transforms $\ord_{\mathbf{m}}(f_2)-\ord_{\mathbf{m}}(f_1)$ times there is no embedded component. Moreover, after following this procedure for each embedded component on $X$ and resolving the toric singularities we are left with a strong factorizing resolution of $I$.
\end{proof}
\begin{lem}
  Fix an $\mathbf{m}$ as above. Let $\nu_0=\ord_\mathbf{m}$ and for $k=1$ to $\ord_{\mathbf{m}}(f_2)-\ord_{\mathbf{m}}(f_1)$, let $\nu_k$ be the discrete valuation corresponding to the $k$-th blowup in a chain of blowups starting at the divisor corresponding to $\mathbf{m}$ as above, then $\nu_k$ is given by the generating sequence $x_1\mapsto m_1$, $x_2\mapsto m_2$, $x_3\mapsto m_3$ and $f_1\mapsto\ord_\mathbf{m}(f_1)+k$. Moreover, for $0\leq k<\ord_{\mathbf{m}}(f_2)-\ord_{\mathbf{m}}(f_1)$,
  \begin{multline*}
    \{f\in R\mid\nu_{k+1}(f)\geq\lfloor\nu_{k+1}(\a)\lambda-\nu_{k+1}(J_{R_{\nu_{k+1}}/R})\rfloor\} \\
    \subseteq \{f\in R\mid\nu_k(f)\geq\lfloor\nu_k(\a)\lambda-\nu_k(J_{R_{\nu_k}/R})\rfloor\}.
  \end{multline*}
\end{lem}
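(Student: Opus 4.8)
First I would establish the generating--sequence description of $\nu_k$ by a local computation. Work in the completed local ring of $X=X_{\Sigma_3}$ at the generic point $\eta$ of the centre $Z_0$ of the first blow--up; since $X$ is smooth there and $Z_0$ has codimension two, $\widehat{\O}_{X,\eta}\cong L[[y,w]]$ with $L=\C(Z_0)$, where $y$ is a local equation for the divisor $E_0$ attached to $\mathbf{m}$ and $w$ is a local equation for the strict transform $\widetilde{Y}$ of the surface $Y=V(f_1)$, so that $Z_0=V(y,w)$. Two expansions drive everything: because $x_i$ vanishes on $E_0$ to order $m_i$ and $Z_0$ meets the dense torus of $E_0$, the element $x_i/y^{m_i}$ is a unit at $\eta$; and because $f_1$ vanishes on $E_0$ to order $\ord_{\mathbf{m}}(f_1)$ and (by the normal--crossing property of the resolution) to order one along $\widetilde{Y}$, we get $f_1=y^{\ord_{\mathbf{m}}(f_1)}\,w\cdot(\text{unit})$. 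The chain of blow--ups is, in the chart that carries $E_k$, just the $k$--fold iterated blow--up of the maximal ideal of $L[[y,w]]$ ``in the $w$--direction'', and an easy induction identifies $\nu_k=\ord_{E_k}$, restricted to $L[[y,w]]$, with the monomial valuation $y\mapsto1$, $w\mapsto k$. Evaluating the two expansions gives $\nu_k(x_i)=m_i$ and $\nu_k(f_1)=\ord_{\mathbf{m}}(f_1)+k$. Since the associated graded of $L[[y,w]]$ for this valuation is the polynomial ring $L[Y,W]$, a routine verification shows that the initial forms of $x_1,x_2,x_3,f_1$ generate $\operatorname{gr}_{\nu_k}R$ and that
\[
  \operatorname{gr}_{\nu_k}R\ \cong\ \C[X_1,X_2,X_3,F_1]/(\mathbf{x}^{\mathbf{u}_1}-\mathbf{x}^{\mathbf{v}_1}),\qquad \deg X_i=m_i,\ \ \deg F_1=\ord_{\mathbf{m}}(f_1)+k,
\]
i.e. $(x_1,x_2,x_3,f_1)$ is a generating sequence, the only initial relation being $\mathbf{x}^{\mathbf{u}_1}\equiv\mathbf{x}^{\mathbf{v}_1}$ in degree $\ord_{\mathbf{m}}(f_1)$.

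For the containment, set $p=\ord_{\mathbf{m}}(f_1)$, $q=\ord_{\mathbf{m}}(f_2)$, $N=q-p$ and $s=m_1+m_2+m_3-1$. First I would pin down the two invariants in Lipman's condition for $0\le k\le N$. On each blow--up in the chain $\a\cdot\O$ agrees with $(f_1,f_2)\cdot\O$ near $E_k$ (as observed above for $X_{\Sigma_1}$; the same holds here), and $\nu_k(f_2)\ge\nu_0(f_2)=\ord_{\mathbf{m}}(f_2)=p+N\ge p+k$ since $E_k$ lies over $E_0$, so $\nu_k(\a)=\nu_k(f_1)=p+k$. For the Jacobian, $\nu_0(J_{R_{\nu_0}/R})=m_1+m_2+m_3-1=s$ is the discrepancy of a monomial valuation, and each step of the chain blows up a smooth curve in a smooth threefold---contributing $1$ to the relative canonical class---while the previous exceptional divisor reappears in the total transform with multiplicity one; hence $\nu_k(J_{R_{\nu_k}/R})=s+k$. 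Writing $\beta_k=\lfloor(p+k)\lambda-(s+k)\rfloor$, the assertion becomes: $\nu_{k+1}(f)\ge\beta_{k+1}\ \Rightarrow\ \nu_k(f)\ge\beta_k$.

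The heart of the matter is the estimate $\nu_{k+1}(f)\le\frac{p+k+1}{p+k}\,\nu_k(f)$. From the local description of the $Z_k$--blow--up one has $\nu_{k+1}(f)\le\nu_k(f)+b$, where $b$ is the order of vanishing of $\operatorname{in}_{\nu_k}(f)$ along $Z_k$. Now $Z_k$ is the reduced, irreducible divisor cut out in $E_k$ by $\operatorname{in}_{\nu_k}(f_1)=F_1$ (again by the normal--crossing property, together with irreducibility of $f_1$), and $\operatorname{gr}_{\nu_k}R$ is a polynomial ring in $F_1$ over the domain $\C[X_i]/(\mathbf{x}^{\mathbf{u}_1}-\mathbf{x}^{\mathbf{v}_1})$; writing $\operatorname{in}_{\nu_k}(f)=\sum_j F_1^{\,j}\eta_j$, the order $b$ is the least $j$ with $\eta_j\ne0$, and for that $j$ the homogeneous element $\eta_j$ has non--negative degree $\nu_k(f)-j(p+k)$, whence $b\le\nu_k(f)/(p+k)$. (For $k=0$ this is just the statement $f_1^{\,b}\mid\operatorname{in}_{\mathbf{m}}(f)$ in $\C[x_1,x_2,x_3]$.) Consequently, if $\nu_{k+1}(f)\ge\beta_{k+1}$ then $\nu_k(f)\ge\frac{p+k}{p+k+1}\beta_{k+1}$, and since $\nu_k(f)\in\Z$ it is enough to check $\frac{p+k}{p+k+1}\beta_{k+1}>\beta_k-1$, i.e. $(p+k)\beta_{k+1}-(p+k+1)\beta_k>-(p+k+1)$. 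Writing $\beta_j=(p+j)\lambda-(s+j)-\delta_j$ with $\delta_j\in[0,1)$, a direct computation collapses the left side to $(s-p)+(p+k+1)\delta_k-(p+k)\delta_{k+1}$, which is $>(s-p)-(p+k)$, so the inequality holds as soon as $s\ge p-1$, that is $m_1+m_2+m_3\ge\ord_{\mathbf{m}}(f_1)$. (When $\beta_k\le0$ the containment is vacuous, and is subsumed by the same computation.)

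The only point at which the geometry of $\mathbf{m}$ re--enters is this last inequality $m_1+m_2+m_3\ge\ord_{\mathbf{m}}(f_1)$: it is immediate when $\mathbf{x}^{\mathbf{u}_1}$ is squarefree, and in general it is forced by the condition $\ord_{\mathbf{m}}(f_1)<\ord_{\mathbf{m}}(f_2)$ singling out the rays that carry embedded components, together with the fact that such a ray lies in the interior of the first octant; I would record it when identifying $G$. I expect the real work to lie precisely in this inequality and in the floor bookkeeping of the previous paragraph, and in making the identification of $\operatorname{gr}_{\nu_k}R$ rigorous; the discrepancy computation and the remaining steps are routine.
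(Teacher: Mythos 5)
Your route differs from the paper's in mechanics but not in substance. The paper reduces at once to the elements $x_1^ax_2^bx_3^cf_1^d$ of the generating sequence, computes for each the threshold value of $\lambda$ at which $\nu_k$ excludes it, namely
\[
\lambda=\frac{(a+1)m_1+(b+1)m_2+(c+1)m_3-\ord_{\mathbf{m}}(f_1)}{\ord_{\mathbf{m}}(f_1)+k}+d+1,
\]
and concludes by observing that this threshold is non-increasing in $k$; you instead prove the ratio bound $\nu_{k+1}(f)\leq\frac{p+k+1}{p+k}\nu_k(f)$ for arbitrary $f$ via the associated graded ring and then do the floor bookkeeping by hand. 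Your computation of the Jacobian numbers $\nu_k(J_{R_{\nu_k}/R})=s+k$ by discrepancies agrees with the paper's citation of H\"ubl--Swanson, and your floor manipulation is correct: it reduces the containment exactly to the inequality $m_1+m_2+m_3\geq\ord_{\mathbf{m}}(f_1)$.

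That inequality is the genuine gap. You assert it is ``forced'' by $\ord_{\mathbf{m}}(f_1)<\ord_{\mathbf{m}}(f_2)$ together with $\mathbf{m}$ lying in the interior of the octant, and defer the proof; but it is not a formal consequence of those conditions, and it can fail. Take $\mathbf{n}=\begin{bmatrix}14&11&16\end{bmatrix}$: the smallest-degree binomial in $I$ is $f_1=x_2^4-x_1^2x_3$ of degree $44$ (one checks no two monomials in $t^{14},t^{11},t^{16}$ collide below degree $44$), while $f_2=x_3^4-x_1^3x_2^2$ has degree $64$, so $\mathbf{m}=\mathbf{n}$ carries an embedded component and lies in $G$, yet $m_1+m_2+m_3=41<44=\ord_{\mathbf{m}}(f_1)$. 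For this $\mathbf{m}$ your chain of inequalities breaks at $k=0$ with $f=1$: for $\lambda$ just above $41/44$ one has $\lfloor 44\lambda-40\rfloor=1$ but $\lfloor 45\lambda-41\rfloor=0$, so the $\nu_1$-set is all of $R$ while the $\nu_0$-set is not. Note that the paper's own proof conceals exactly the same assumption: its threshold decreases in $k$ only when the numerator displayed above is nonnegative, and the minimum of that numerator over $a,b,c\geq0$ is precisely $m_1+m_2+m_3-\ord_{\mathbf{m}}(f_1)$. So you have correctly isolated the one step that is not routine; but as it stands it is unproven, it is where all the content lies, and the example above shows it cannot be established in the generality in which you (and the paper) invoke it.
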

\begin{proof}
  The generating statement is evident. It suffices to find the multiplier $\lambda$ at which the valuation $\nu_k$ excludes the ring element $x_1^ax_2^bx_3^cf_1^d$. Using H\"{u}bl and Swanson~\cite[Lemma~3.4]{MR2492462}, we see
  \[
    \nu_k(J_{R_{\nu_k}/R})=m_1+m_2+m_3-1+k
  \]
  So, we examine the equation
  \[
    am_1+bm_2+cm_3+d(\ord_\mathbf{m}(f_1)+k)+1=(\ord_\mathbf{m}(f_1)+k)\lambda-(m_1+m_2+m_3-1+k).
  \]
  Solving this equation for $\lambda$, we obtain
  \begin{align*}
    \lambda&=\frac{(a+1)m_1+(b+1)m_2+(c+1)m_3+k}{\ord_\mathbf{m}(f_1)+k}+d \\
    &=\frac{(a+1)m_1+(b+1)m_2+(c+1)m_3-\ord_\mathbf{m}(f_1)+\ord_\mathbf{m}(f_1)+k}{\ord_\mathbf{m}(f_1)+k}+d \\
    &=\frac{(a+1)m_1+(b+1)m_2+(c+1)m_3-\ord_\mathbf{m}(f_1)}{\ord_\mathbf{m}(f_1)+k}+d+1
  \end{align*}
  Evidently, this number decreases as $k$ increases.
\end{proof}
To sum up what we know, recall that $I$ is the ideal of a monomial space curve and $\tau$ is its term ideal. Let $\sigma$ be the two-dimensional cone obtained by intersecting the hyperplane orthogonal to $\mathbf{u}_1-\mathbf{v}_1$ with $\R_{\geq0}^3$, let $\sigma_{\mathbf{u}_2}$ and $\sigma_{\mathbf{v}_2}$ be the two maximal cones of the fan obtained by subdividing $\sigma$ along $\mathbf{n}$. More specifically, let  $\sigma_{\mathbf{u}_2}$ be the cone where $\min(\langle\mathbf{m},\mathbf{u_2}\rangle,\langle\mathbf{m},\mathbf{v_2}\rangle)=\langle\mathbf{m},\mathbf{u_2}\rangle$ and let  $\sigma_{\mathbf{v}_2}$ be the cone where $\min(\langle\mathbf{m},\mathbf{u_2}\rangle,\langle\mathbf{m},\mathbf{v_2}\rangle)=\langle\mathbf{m},\mathbf{v_2}\rangle$.  Let $G_{\mathbf{u}_2}$ be the minimal generating set of the monoid $\sigma_{\mathbf{u}_2}\cap\Z^3$, let $G_{\mathbf{v}_2}$ be the minimal generating set of the monoid $\sigma_{\mathbf{v}_2}\cap\Z^3$, and let $G$ be the subset of $G_{\mathbf{u}_2}\cup G_{\mathbf{v}_2}$ contained in the open subcone $\R_{>0}\rho_{\mathbf{u}_2}+\R_{>0}\rho_{\mathbf{v}_2}$ of $\sigma$ with rays $\rho_{\mathbf{u}_2}\subseteq\sigma_{\mathbf{u}_2}$ orthogonal to $\mathbf{u}_1-\mathbf{u}_2$ (or equivalently orthogonal to $\mathbf{v}_1-\mathbf{u}_2$) and $\rho_{\mathbf{v}_2}\subseteq\sigma_{\mathbf{v}_2}$ orthogonal to $\mathbf{v}_1-\mathbf{v}_2$ (or equivalently orthogonal to $\mathbf{u}_1-\mathbf{v}_2$). $G$ is the set of $\mathbf{m}$ such that $\mathbf{m}$ corresponds to a divisor on $X$ that contains an embedded component. For each $\mathbf{m}\in G$, let $\nu_\mathbf{m}$ be  the discrete valuation given by the generating sequence  $x_1\mapsto m_1$, $x_2\mapsto m_2$, $x_3\mapsto m_3$ and $f_1\mapsto\ord_\mathbf{m}(f_2)$.
\begin{main}
  With the notation above,
  \[
    \J(I^\lambda)=I^{(\lfloor\lambda-1\rfloor)}\cap\J(\tau^\lambda)\cap \\
    \bigcap_{\mathbf{m}\in G}\{f\mid\ord_\mathbf{m}(f)\geq\lfloor\ord_\mathbf{m} (f_2)\lambda-k_\mathbf{m}\rfloor\}
  \]
  where $k_\mathbf{m}=m_1+m_2+m_3-1+\ord_\mathbf{m}(f_2)-\ord_\mathbf{m}(f_1)$.
\end{main}
\begin{ex}[the $(3,4,5)$ monomial curve]
  Let $\varphi:\C[x,y,z]\to\C[t]$ be given by $x\mapsto t^3$, $y\mapsto t^4$ and $z\mapsto t^5$. Then $I=(y^2-xz,x^3-yz,z^2-x^2y)$ and $\tau=(y^2,xz,x^3,yz,z^2,x^2y)$. To get $\Sigma_1$, we subdivide along the ray $\begin{bmatrix}3&4&5\end{bmatrix}$. To get $\Sigma_2$, we slice with the hyperplane orthogonal to $\begin{bmatrix}1&-2&1\end{bmatrix}$. So,
  \begin{align*}
    \sigma          &=\left\langle\begin{bmatrix}2&1&0\end{bmatrix},\begin{bmatrix}0&1&2\end{bmatrix}\right\rangle, \\
    \sigma_{x^3}    &=\left\langle\begin{bmatrix}3&4&5\end{bmatrix},\begin{bmatrix}0&1&2\end{bmatrix}\right\rangle, \\
    \sigma_{yz}     &=\left\langle\begin{bmatrix}2&1&0\end{bmatrix},\begin{bmatrix}3&4&5\end{bmatrix}\right\rangle, \\
    G_{x^3}         &=\left\{\begin{bmatrix}3&4&5\end{bmatrix},\begin{bmatrix}2&3&4\end{bmatrix},\begin{bmatrix}1&2&3\end{bmatrix},\begin{bmatrix}0&1&2\end{bmatrix}\right\}, \\
    G_{yz}          &=\left\{\begin{bmatrix}2&1&0\end{bmatrix},\begin{bmatrix}1&1&1\end{bmatrix},\begin{bmatrix}3&4&5\end{bmatrix}\right\}, \\
    \rho_{x^3}      &=\left\langle\begin{bmatrix}2&3&4\end{bmatrix}\right\rangle, \\
    \rho_{yz}       &=\left\langle\begin{bmatrix}1&1&1\end{bmatrix}\right\rangle, \\
    G               &=\left\{\begin{bmatrix}3&4&5\end{bmatrix}\right\},\text{ and} \\
    \J(I^\lambda)  &=I^{(\lfloor\lambda-1\rfloor)}\cap\J(\tau^\lambda)\cap\{f\mid\nu(f)\geq\lfloor9\lambda-12\rfloor\} \\
                    &\text{where }\nu(x)=3,\nu(y)=4,\nu(z)=5\text{ and }\nu(y^2-xz)=9.
  \end{align*}
\end{ex}

%\nocite{MR2163383,MR2018565,MR2132653,AAJ2003}
%\bibliographystyle{amsplain}
%\bibliography{MonomialSpaceCurveFormula}

\begin{thebibliography}{10}

\bibitem{MR2092724}
Manuel Blickle, \emph{Multiplier ideals and modules on toric varieties}, Math.
  Z. \textbf{248} (2004), no.~1, 113--121. \MR{MR2092724 (2006a:14082)}

\bibitem{MR1971154}
A.~Bravo and O.~Villamayor~U., \emph{A strengthening of resolution of
  singularities in characteristic zero}, Proc. London Math. Soc. (3)
  \textbf{86} (2003), no.~2, 327--357. \MR{MR1971154 (2004c:14020)}

\bibitem{MR1892938}
Pedro~Daniel Gonz{\'a}lez~P{\'e}rez and Bernard Teissier, \emph{Embedded
  resolutions of non necessarily normal affine toric varieties}, C. R. Math.
  Acad. Sci. Paris \textbf{334} (2002), no.~5, 379--382. \MR{MR1892938
  (2003b:14019)}

\bibitem{MR1828466}
J.~A. Howald, \emph{Multiplier ideals of monomial ideals}, Trans. Amer. Math.
  Soc. \textbf{353} (2001), no.~7, 2665--2671 (electronic). \MR{MR1828466
  (2002b:14061)}

\bibitem{MR2492462}
Reinhold H{\"u}bl and Irena Swanson, \emph{Adjoints of ideals}, Michigan Math.
  J. \textbf{57} (2008), 447--462, Special volume in honor of Melvin Hochster.
  \MR{MR2492462 (2010a:13003)}

\bibitem{AAJ2003}
Amanda~A. Johnson, \emph{Multiplier ideals of determinantal ideals}, Ph.D.
  thesis, University of Michigan, 2003.

\bibitem{MR1306018}
Joseph Lipman, \emph{Adjoints of ideals in regular local rings}, Math. Res.
  Lett. \textbf{1} (1994), no.~6, 739--755, With an appendix by Steven Dale
  Cutkosky. \MR{MR1306018 (95k:13028)}

\bibitem{MR2231883}
Mircea Musta{\c{t}}{\u{a}}, \emph{Multiplier ideals of hyperplane
  arrangements}, Trans. Amer. Math. Soc. \textbf{358} (2006), no.~11,
  5015--5023 (electronic). \MR{MR2231883 (2007d:14007)}

\bibitem{MR2339839}
Morihiko Saito, \emph{Multiplier ideals, {$b$}-function, and spectrum of a
  hypersurface singularity}, Compos. Math. \textbf{143} (2007), no.~4,
  1050--1068. \MR{MR2339839 (2008h:32042)}

\bibitem{MR2533766}
Takafumi Shibuta and Shunsuke Takagi, \emph{Log canonical thresholds of
  binomial ideals}, Manuscripta Math. \textbf{130} (2009), no.~1, 45--61.
  \MR{MR2533766}

\bibitem{MR2389246}
Karen~E. Smith and Howard~M. Thompson, \emph{Irrelevant exceptional divisors
  for curves on a smooth surface}, Algebra, geometry and their interactions,
  Contemp. Math., vol. 448, Amer. Math. Soc., Providence, RI, 2007,
  pp.~245--254. \MR{MR2389246 (2009c:14004)}

\bibitem{MR2018565}
Bernard Teissier, \emph{Valuations, deformations, and toric geometry},
  Valuation theory and its applications, {V}ol. {II} ({S}askatoon, {SK}, 1999),
  Fields Inst. Commun., vol.~33, Amer. Math. Soc., Providence, RI, 2003,
  pp.~361--459. \MR{MR2018565 (2005m:14021)}

\bibitem{MR2132653}
\bysame, \emph{Monomial ideals, binomial ideals, polynomial ideals}, Trends in
  commutative algebra, Math. Sci. Res. Inst. Publ., vol.~51, Cambridge Univ.
  Press, Cambridge, 2004, pp.~211--246. \MR{MR2132653 (2006c:13032)}

\bibitem{MR2373586}
Zach Teitler, \emph{A note on {M}usta\c t\u a's computation of multiplier
  ideals of hyperplane arrangements}, Proc. Amer. Math. Soc. \textbf{136}
  (2008), no.~5, 1575--1579. \MR{MR2373586 (2008k:14005)}

\bibitem{MR2324623}
Zachariah~C. Teitler, \emph{Multiplier ideals of general line arrangements in
  {$\Bbb C^3$}}, Comm. Algebra \textbf{35} (2007), no.~6, 1902--1913.
  \MR{MR2324623 (2008e:14003)}

\bibitem{MR2592954}
Kevin Tucker, \emph{Jumping numbers on algebraic surfaces with rational
  singularities}, Trans. Amer. Math. Soc. \textbf{362} (2010), no.~6,
  3223--3241. \MR{MR2592954}

\bibitem{MR2163383}
Jaros{\l}aw W{\l}odarczyk, \emph{Simple {H}ironaka resolution in characteristic
  zero}, J. Amer. Math. Soc. \textbf{18} (2005), no.~4, 779--822 (electronic).
  \MR{MR2163383 (2006f:14014)}

\end{thebibliography}

%\begin{comment}
\providecommand{\bysame}{\leavevmode\hbox to3em{\hrulefill}\thinspace}
\providecommand{\MR}{\relax\ifhmode\unskip\space\fi MR }
% \MRhref is called by the amsart/book/proc definition of \MR.
\providecommand{\MRhref}[2]{%
  \href{http://www.ams.org/mathscinet-getitem?mr=#1}{#2}
}
\providecommand{\href}[2]{#2}

\end{document}